\newtheorem{theorem}{Theorem}[section]
\newtheorem{lemma}[theorem]{Lemma}
\newtheorem{corollary}[theorem]{Corollary}
\newtheorem{proposition}[theorem]{Proposition}
\newtheorem{definition}[theorem]{Definition}
\def\R{\mathbb{R}}
\def\set#1{\left\{\, #1 \,\right\}}
\providecommand{\abs}[1]{\left\vert \,#1\, \right\vert}
\providecommand{\norm}[1]{\left\| \,#1\, \right\|}
\providecommand{\inner}[2]{\left\langle\,#1,#2\,\right\rangle}
\def\H{\mathbb{H}}
\def\calH{\mathcal{H}}
\def\calM{\mathcal{M}}
\title[Minimizing configurations and Hamilton-Jacobi
equations]{Minimizing configurations and Hamilton-Jacobi
equations of homogeneous N-body problems}
\author[E. Maderna]{Ezequiel Maderna}
 \address{Centro de Matemática\\
    Universidad de la República, Montevideo, Uruguay.}
 \email{emaderna@cmat.edu.uy}
\begin{document}

\begin{abstract}
For $N$-body problems with homogeneous potentials
we define a special class of central configurations
related with the reduction of homotheties in the study
of homogeneous weak KAM solutions. For potentials in
$1/r^\alpha$ with $\alpha\in (0,2)$ we prove the existence
of homogeneous weak KAM solutions. We show that such
solutions are related to viscosity solutions of another
Hamilton-Jacobi equation in the sphere of normal configurations.
As an application we prove for the Newtonian
three body problem that there are no
smooth homogeneous solutions to the critical
Hamilton-Jacobi equation.
\end{abstract}

\maketitle

\section{Introduction.}

We consider N-body problems with homogeneous potentials
\[U(x)=\sum_{i<j}\frac{m_i\,m_j}{r_{ij}\,^{2\kappa}}\]
where $x=(r_1,\dots,r_N)\in E^N$ is a configuration
of the $N$ massive punctual bodies in some Euclidean space
$E$, the positive constants $m_i$ are their respective
masses, and $r_{ij}=\norm{r_i-r_j}$. The case in which
$E=\R^3$ and $\kappa=1/2$
is the classical Newtonian N-body problem. An old and
natural question related to the study of the dynamics
of such systems is the complete integrability. When the
problem is completely integrable, the phase space can be
foliated by invariant Lagrangian manifolds, and each
leaf must then be contained in a level set of the energy function.
In particular, if one of these Lagrangian manifolds
is a graph over the space of configurations, it must
correspond via the Legendre transformation with the
graph of a closed 1-form $\omega$ which satisfies
$H(x,\omega_x)=c$ for some constant $c\in\R$.
Since the Hamiltonian of the system is the function
on the cotangent bundle of the configuration space
given by
\[H(x,p)=\frac{1}{2}\norm{p}^2-U(x)\]
and $\inf U(x)=0$,
we have that the last equation can not be solved
if $c<0$. Of course, the Hamiltonian is finite only
in the open and dense set of configurations without
collisions
\[\Omega=\set{x=(r_1,\dots,r_N)\in E^N\mid
r_i=r_j \iff i=j}\]
which is simply connected when $\dim E\geq 3$. Therefore,
these considerations lead us to investigate the existence of
global solutions of the Hamilton-Jacobi equation
$H(x,d_xu)=c$ for $c\geq 0$. In this paper we study
the critical case,
\begin{equation}\label{HJ}
\norm{d_xu}^2=2\,U(x)
\end{equation}
and especially, the existence of global
homogeneous solutions for this equation.
In all what follows, the norm of a configuration
in $E^N$ will be the norm associated to the mass
inner product, and $(E^N)^*$ will be endowed with the
corresponding dual norm.

The author has showed in \cite{Mad1},
for values of $\kappa\in (0,1)$, the existence of
global viscosity solutions to the Hamilton-Jacobi equation
\eqref{HJ} using weak KAM theory. Moreover, there are
invariant solutions with respect to the obvious action of the
compact Lie group $O(E)$ in $E^N$. In \cite{Mad2} it was
proved that, in the Newtonian case, every weak solution of
\eqref{HJ} is invariant with respect to the action
by translations of $E$ in $E^N$.
That is to say, each solution of \eqref{HJ} is uniquely determined
by his restriction to the subspace $V$ of $E^N$ of configurations with center
of mass at $0\in E$. Thus, a configuration $x=(r_1,\dots,r_N)\in E^N$ is
in $V$ if and only if $\sum_{i=1}^N m_i\,r_i=0$.
We also have that
\[V=\Delta^\perp=\set{(r,\dots,r)\mid r\in E}^\perp\]
where the orthogonal complement is taken with respect to the
mass inner product in $E^N$.
Moreover, the translation invariance of a given function
$f:E^N\to\R$ implies that at each point of differentiability
$x\in E^N$ we have
\[d_{p(x)}(f\mid_V)=(d_xf)\mid_V\]
where $p:E^N\to V$ is the orthogonal projection on $V$ (in other words,
$p(x)$ is the unique translation of $x$ with center of mass at $0\in E$).
Therefore we have:

\vspace{1mm}
\emph{A function $u:E^N\to\R$ is a solution (in any possible way)
of Hamilton-Jacobi equation \eqref{HJ} if and only if his restriction
$u\mid_V:V\to R$ is a solution of the same Hamilton-Jacobi equation in $V$.}
\vspace{1mm}

For this reason, in all what follows we will only consider configurations
in $V$, and functions $u:V\to\R$. As a subspace of $E^N$, $V$ is also
an Euclidean space with the mass inner product, and his dual space
will be considered with the corresponding dual norm.

On the other hand,
non rotation invariant weak solutions can exist, and
a simple example for the planar Kepler problem was suggested
by Alain Chenciner and the author, and later it was found
explicitly by Andrea Venturelli
(We will explain more about these examples in \cite{VentuMad2}).
More or less at the same time, Alain Chenciner asked
if all these weak solutions are necessarily homogeneous functions
modulo a constant.
Again using the non rotation invariant examples, as well as
some characteristic properties of weak KAM solutions,
non homogeneous weak solutions can be constructed.
Here we will prove the existence of homogeneous weak
KAM solutions (theorem \ref{existHweakKAM} bellow).

Until now the most fruitful application of the existence
results of weak KAM solutions is that each one of them
gives rise to a lamination of the configuration space by
completely parabolic motions, showing the abundance of such
motions (see \cite{daLuzMad,VentuMad}).
Moreover, the associated lamination
defines the solution up to a constant, thus it is natural
to expect that invariance properties of the solutions can
be expressed in terms of properties of the lamination.

Let $I(x)$ be the moment of inertia of a configuration
$x=(r_1,\dots,r_N)$ with respect to the origin of $E$,
that is to say,
\[I(x)=\sum_{i=1}^Nm_i\,r_i^2\]
and let $S=\set{x\in V\mid I(x)=1}$ be the sphere of
normal configurations. In fact, $I$ is the quadratic form
associated to the mass inner product in $V$ and $S$ is
the unit sphere in $V$ of the corresponding norm.
Every configuration $x\neq 0$ has a unique polar decomposition,
namely $x=\lambda s$, with $\lambda>0$ and $s\in S$.
Therefore an homogeneous function $u:V\to\R$
of degree $\alpha$ is uniquely determined by his restriction
$v$ to the unit sphere $S$, since we must have
$u(\lambda\,s)=\lambda^\alpha\,v(s)$.
We will show that, the equation must satisfy
the function $v$ in order to be
an homogeneous solution of \eqref{HJ}
the function $u$, is the
Hamilton-Jacobi equation
\begin{equation}\label{HJH}
(1-\kappa)^2\,v(s)^2+\norm{d_sv}^2=2\,U(s)\,.
\end{equation}
Note that this equation is not the Hamilton-Jacobi
equation arising from a Tonelli Hamiltonian.
However we will deduce for $\kappa\in (0,1)$
the existence of global viscosity solutions of
Hamilton-Jacobi equation \eqref{HJH}, see theorem
\ref{existsViscSol2} bellow.

Following the analogy with the Aubry-Mather theory
we define a special type of central configurations,
and we will prove that they are intimately related
with the solutions of \eqref{HJH}. Recall that
a free time minimizer is a curve whose
restriction to any compact interval minimizes the
Lagrangian action in the set of all curves with the same
extremities (see \cite{daLuzMad} for a detailed description
of this concept in the Newtonian case).
They correspond to the semistatic curves in the Aubry-Mather
theory and must have critical energy (zero energy in our case).
We also recall that a central
configuration is a configuration $x\in V$ for which
there are homothetic motions passing through it. If that
is the case, then only two (modulo translation of time)
of these homothetic motions
have zero energy, namely the parabolic ejection and the parabolic
collision by $x$.

\begin{definition}
A \emph{minimizing configuration} is a central configuration
such that the corresponding parabolic ejection is a free time minimizer.
We will denote $\calM$ the set of normal minimizing configurations
(i.e. in the sphere $S$).
\end{definition}

The set of minimizing configurations is not empty. Note that the
potential $U$ has a minimum on the sphere $S$. If we denote
\[U_0=\min\set{U(x)\mid x\in S}\]
and
\[\calM_0=\set{x\in S\mid U(x)=U_0}\]
then it is clear that $\calM_0$ is not empty. We will call
$\calM_0$ the set of normal \emph{minimal configurations}.
It is not difficult to prove that we have
\[\calM_0 \subset \calM\]
or in other words, that every minimal configuration is minimizing.
An easy proof of this fact in the Newtonian case can be found in
\cite{daLuzMad} (proposition 3.4), and the same proof works for
any homogeneous potential with minor changes of the constants.

In the context of the
Aubry-Mather theory, there is a well known conjecture due
to Ricardo Mañé which says that, for a generic Tonelli Lagrangian
on a given closed manifold $M$, the Mather set is reduced to
an hyperbolic periodic orbit of the Lagrangian flow on the
tangent bundle $TM$.
When this happens all the theory becomes simple, for instance
the semistatic curves are exactly the projection on $M$ of orbits
in the stable manifold of the Mather set, and there is only one
weak KAM solution modulo a constant.
Several years ago Renato Iturriaga asked to the author if
there is an analogous conjecture in our context. He proposed
that the correct conjecture must be: \emph{For generic values
of the masses, there is only one normal minimal configuration
(modulo isometries) and it is non degenerate}. Of course,
here the non degeneracy refers to the transversal directions
to the action of the orthogonal group $O(E)$ on the sphere $S$.
Now we can see that the interesting question is to determine
if the same happens with the minimizing configurations instead
of the minimal ones. Note that the uniqueness of
minimizing configuration implies $\calM=\calM_0$, but
the only known results which proves this equality was obtained
for some homogeneous $N$-body problems by Barutello and Secchi
in \cite{Vivina}. Using a variational Morse-like index they prove
that several colliding trajectories are not minimizing.
In particular they prove for the three body problem with
equal masses that the collinear central configurations are not minimizing
whenever $\kappa>3-2\sqrt{2}$ (note that the Newtonian case is included).
Thus in these case the only minimizing configurations are the Lagrange
equilateral configurations. In order to show the interest of
this analysis, we will prove in section \ref{sec:smoothHsol}
the following theorem.

\begin{theorem}\label{teo:3BP no smooth homogeneous}
The critical Hamilton-Jacobi equation of the Newtonian three body problem
has no smooth homogeneous solutions.
\end{theorem}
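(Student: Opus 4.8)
The plan is to argue by contradiction: suppose $u:V\to\R$ is a smooth homogeneous solution of the critical Hamilton-Jacobi equation \eqref{HJ} for the Newtonian three body problem. By the homogeneity reduction explained in the introduction, its restriction $v$ to the sphere $S$ of normal configurations is a \emph{smooth} solution of the reduced equation \eqref{HJH} with $\kappa=1/2$, hence it satisfies $\tfrac14\,v(s)^2+\norm{d_sv}^2=2\,U(s)$ on all of $S$. The key observation I would exploit is that a smooth solution is in particular a viscosity solution, and more importantly that its gradient flow produces genuine calibrated curves through \emph{every} point of $S$. Since the homogeneous function $u$ is a weak KAM (free time minimizer) solution, the parabolic ejections tangent to the direction $d_xu$ are free time minimizers passing through every configuration. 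The strategy is to show that the existence of a globally smooth $v$ forces \emph{every} central configuration to be a minimizing configuration, i.e. $\calM=S\cap\{\text{central configs}\}$, and in particular forces the collinear (Euler) central configurations into $\calM$.

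The mechanism I would use is the following. Where $u$ is smooth, the characteristics of \eqref{HJ} are exactly the free time minimizers, and through a central configuration $x$ the unique zero-energy homothetic ejection must be the calibrated curve, so $x$ must be a minimizing configuration. Here the crucial input is that central configurations are precisely the critical points of the restricted potential $U\mid_S$ (equivalently, the directions along which the force is radial), and a smooth homogeneous solution cannot avoid them: at a central configuration $s\in S$ the term $\norm{d_sv}^2$ interacts with $U(s)$ in a way that, together with smoothness, pins down $d_sv$ so that the associated characteristic is the homothetic motion. Thus smoothness everywhere would imply that all central configurations — including the collinear ones — are minimizing.

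At this point I invoke the result of Barutello and Secchi \cite{Vivina} quoted in the introduction: for the three body problem with equal masses and $\kappa>3-2\sqrt2$, and in particular in the Newtonian case $\kappa=1/2$, the collinear central configurations are \emph{not} minimizing. This is the contradiction: a globally smooth $v$ would force the collinear configurations to lie in $\calM$, whereas \cite{Vivina} shows they do not. Therefore no smooth homogeneous solution can exist. I would phrase the final step carefully so as to cover the relevant mass regime; since the theorem is stated for the (equal-mass or general) Newtonian case, I would check that \cite{Vivina} applies, or otherwise reduce to the equal-mass subcase where non-minimality of the Euler configurations is established.

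The main obstacle I anticipate is the rigorous link between smoothness of $v$ on all of $S$ and the conclusion that \emph{every} central configuration is minimizing. The soft statement ``smooth solution $\Rightarrow$ its characteristics are minimizers'' is standard for Tonelli Hamilton-Jacobi equations, but \eqref{HJH} is explicitly \emph{not} of Tonelli type, so I cannot cite that machinery directly. The delicate work will be to verify that along the homothetic (parabolic) ejection through a central configuration the smooth $v$ still calibrates — i.e. that the gradient $d_sv$ really points along the central direction and the induced curve is the parabolic ejection — which requires analyzing the behavior of $v$ and its differential near the central configurations and ruling out that the smooth extension ``turns'' away from the homothetic direction. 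Handling the transversal (to collisions) regularity and confirming the free-time-minimizing property of these curves, rather than merely their extremal character, is where the real content of the proof will lie.
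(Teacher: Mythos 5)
Your argument has a genuine gap at its central step: the claim that smoothness of $v$ forces \emph{every} central configuration to be minimizing. What equation \eqref{HJH2} together with the inequality $\abs{v(s)}\leq\phi(s,0)\leq\psi(s)$ actually yields is only the inclusion $Z_v\subset\calM$: at a \emph{critical point of $v$} one gets $\abs{v(s)}=\psi(s)$, hence $\psi(s)=\phi(s,0)$ and $s$ is minimizing by proposition \ref{prop: min iff psi=phi0}. Nothing forces a central configuration to be a critical point of $v$. The calibrating curve through $s$ has initial velocity the Legendre transform of $d_su$, whose component tangent to $S$ is the gradient of $v$ at $s$; the fact that $s$ is a critical point of $U\mid_S$ places no constraint on $d_sv$ in the equation $\tfrac14 v(s)^2+\norm{d_sv}^2=2U(s)$, so the characteristic through a central configuration need not be the homothetic ejection, and the mechanism you describe for ``pinning down'' $d_sv$ does not exist (for the Busemann solutions $b_{s_0}$, for instance, one expects critical points of the restriction only at configurations tied to $s_0$, not at every central configuration). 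A secondary problem is that the Barutello--Secchi result \cite{Vivina} you invoke is proved for equal masses, so even if the first step held you would only obtain the theorem in the equal-mass case; there is no reduction of the general-mass statement to that subcase.

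The paper's proof goes in a different, purely topological direction. Theorem \ref{teo: foliation in S} shows (a) that $Z_v\subset\calM$, and (b) that on the open set $A=S\setminus(K\cup Z_v)$, which is connected, the gradient flow of $v$ runs in backward time to $Z_v$ and in finite forward time to the set $K$ of normal partial collisions, producing a \emph{continuous surjection} $r:A\to K$. Since $K$ has three connected components for three bodies while $A$ is connected, this is a contradiction --- valid for all choices of the masses and requiring no information about which central configurations are minimizing. If you want to salvage your line of attack, the piece you would need to prove is precisely the one you flag as the ``main obstacle,'' and it is not a technical refinement but the false heart of the argument; the topological route is what replaces it.
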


\section{The Lax-Oleinik semigroup and weak KAM solutions.}

We need to recall briefly some facts about the Lagrangian
action, and variational properties related to
the Hamilton-Jacobi equation. Also we recall
the definition of the Lax-Oleinik semigroup whose
fixed points are precisely the weak KAM solutions.
The proofs of the statements below can be found
in \cite{Mad1}.

If $\gamma:[a,b]\to V$ is an absolutely continuous curve,
then the Lagrangian action of $\gamma$ is the value in
$(0,+\infty]$ defined by
\[A(\gamma)=\int_a^b \frac{1}{2}\,\dot\gamma(t)^2+
U(\gamma(t))\,dt\]
where the square of the vector $\dot\gamma(t)$, which is
defined for almost every $t\in [a,b]$, is taken with
respect to the mass inner product in $V$. It is well
known that if such a curve has a finite action, then it
must be in the Sobolev space $H^1([a,b],V)$.
Of course, since our system is autonomous, each curve
can be parameterized in an interval of the form $[0,t]$
by translation in time, and preserving his action.
We will denote $\phi(x,y,t)$ the infimum of the Lagrangian
action in the set of all curves going from $x$ to $y$
in time $t>0$. The infimum without restriction of time
will be denoted $\phi(x,y)$.
We know that for each $\kappa\in (0,1)$, there is a
positive constant $\eta>0$ such that the inequality
\begin{equation}\label{holder}
\phi(x,y)=\inf_{t>0}\phi(x,y,t)
\leq \eta\, \norm{x-y}^{1-\kappa}
\end{equation}
holds for any pair of configurations $x,y\in E^N$.

The set of weak subsolutions of Hamilton-Jacobi
equation \eqref{HJ} is
\[\calH=\set{u:V\to\R\mid u(x)-u(y)\leq \phi(x,y)
\textrm{ for all }x,y\in V}\]
and will be endowed with the topology
of uniform convergence on compact subsets.
Since there is a trivial action of $\R$ in $\calH$
given by addition of constants, we can deduce that
$\calH$ is homeomorphic to $\R\times \calH_0$, where
\[\calH_0=\set{u\in\calH\mid u(0)=0}\]
is a compact set of functions because of the
Hölder estimate \eqref{holder}.
The set of weak subsolutions $\calH$ is clearly convex.
Another interesting property of $\calH$ is that it
contains the infimum of any family of his elements
whenever the infimum is finite.

\begin{lemma}\label{infimoSUBsol}
If $F\subset\calH$ is such that
$u_F(x_0)=\inf\set{u(x_0)\mid u\in F}>-\infty$
for some $x_0\in E^N$, then
$u_F(x)=\inf\set{u(x)\mid u\in F}$ is finite
at every configuration $x\in E^N$ and defines a
weak subsolution $u_F\in\calH$.
\end{lemma}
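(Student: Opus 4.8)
The plan is to use only the defining inequality of $\calH$, namely that $w(x)-w(y)\le\phi(x,y)$ for every $w\in\calH$ and all $x,y\in V$, together with the fact that $\phi$ is a finite nonnegative function, its finiteness being guaranteed by the H\"older estimate \eqref{holder}. Throughout I assume $F\neq\emptyset$, which is implicit in the hypothesis that $u_F(x_0)>-\infty$ is a genuine real number.

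First I would establish that $u_F$ is finite at every $x\in V$. For the lower bound, fix an arbitrary $u\in F$ and apply the subsolution inequality to the pair $(x_0,x)$, i.e. $u(x_0)-u(x)\le\phi(x_0,x)$, which gives $u(x)\ge u(x_0)-\phi(x_0,x)\ge u_F(x_0)-\phi(x_0,x)$. Since the right-hand side does not depend on $u$, taking the infimum over $F$ yields $u_F(x)\ge u_F(x_0)-\phi(x_0,x)>-\infty$. For the upper bound it suffices to note that $u_F(x)\le u(x)<+\infty$ for any single $u\in F$. Hence $u_F(x)\in\R$ for every $x$.

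Next I would verify the domination inequality for $u_F$. Fix $x,y\in V$. For every $u\in F$ we have $u_F(x)\le u(x)\le u(y)+\phi(x,y)$, hence $u_F(x)-\phi(x,y)\le u(y)$. Because the left-hand side is independent of $u$, taking the infimum over $u\in F$ on the right gives $u_F(x)-\phi(x,y)\le u_F(y)$, that is $u_F(x)-u_F(y)\le\phi(x,y)$. As this holds for all $x,y\in V$, the function $u_F$ belongs to $\calH$.

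The argument involves no genuinely hard step; the only points requiring care are the order of the two infimum operations --- the term $\phi(x,y)$ must be isolated on the side free of $u$ before passing to the infimum, since $\phi$ itself carries no dependence on $u$ --- and checking that every quantity remains finite, which is precisely where the finiteness of $\phi$ furnished by \eqref{holder} is used.
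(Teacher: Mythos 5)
Your proof is correct and follows essentially the same route as the paper's: use the subsolution inequality against the base point $x_0$ to get finiteness of $u_F$ everywhere, then rerun the same inequality for an arbitrary pair $(x,y)$ and pass to the infimum over $F$. The only difference is presentational — you spell out the order of the infimum operations that the paper leaves implicit in its phrase ``replacing $x$ and $x_0$ by any two configurations.''
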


\begin{proof}
Let $x\in V$ be any configuration. Since
$F\subset\calH$, for each $u\in F$
we have
$u(x)\geq u(x_0)-\phi(x_0,x)$, thus
\[u(x)\geq u_F(x_0)-\phi(x_0,x)\]
which implies that $u_F(x)\geq -\infty$ and
that $u_F(x_0)-u_F(x)\leq \phi(x_0,x)$.
Replacing now $x$ and $x_0$ by any two
configurations $x$ and $y$ in the previous
argument we conclude that $u_F\in\calH$.
\end{proof}

The action in $\calH$ of the Lax-Oleinik semigroup
$(T_t)_{t\geq 0}$ is given by
\[T_tu(x)=\inf\set{u(y)+\phi(y,x,t)\mid y\in V}\]
for $t>0$, and $T_0u=u$ for all $u\in\calH$.
Note that we have
\[\calH=\set{u:V\to\R\mid
u\leq T_tu\textrm{ for all }t\geq 0}\,.\]
Note also that if $u_1,u_2\in\calH$, and
$u_1\leq u_2$, then $T_tu_1\leq T_tu_2$.
The action of the Lax-Oleinik semigroup is continuous,
and the weak KAM theorem says that the set of fixed
points is not empty.

\begin{definition}
A function $u:V\to\R$ is called a \emph{weak KAM solution} if
it is a fixed point of the Lax-Oleinik semigroup
($u=T_tu$ for all $t\geq 0$).
\end{definition}

Weak KAM solutions are viscosity solutions of
Hamilton-Jacobi equation \eqref{HJ}, a notion of
weak solution that we will recall in section
\ref{visco}. They can be characterized
between weak subsolutions as follows:

\begin{proposition}\label{calibrating}
A function $u$ is a weak KAM solution if and only if
\begin{enumerate}
  \item[1.] $u\in\calH$
  \item[2.] Given $x\in V$ there is a curve $\gamma$
            defined for $t\leq 0$ such that
            \begin{enumerate}
             \item $\gamma(0)=x$
             \item $u(x)-u(\gamma(t))=A(\gamma\mid_{[t,0]})$
                for all $t\leq 0$.
            \end{enumerate}
\end{enumerate}
\end{proposition}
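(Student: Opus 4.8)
The plan is to prove the two implications separately, relying on the description $\calH=\set{u\mid u\leq T_tu \textrm{ for all }t\geq 0}$ recalled above and on the concatenation (dynamic programming) inequality $\phi(x,z,t+s)\leq \phi(x,y,t)+\phi(y,z,s)$, together with the kinetic lower bound for the action and the Hölder estimate \eqref{holder}. I would first dispatch the easy implication, that conditions 1 and 2 force $u$ to be a weak KAM solution. Since $u\in\calH$ we already have $u\leq T_tu$ for every $t\geq 0$, so it suffices to prove the reverse inequality $T_tu\leq u$. Fix $x\in V$ and $t>0$, and let $\gamma$ be the backward calibrated curve given by condition 2. Setting $y=\gamma(-t)$, the restriction $\gamma\mid_{[-t,0]}$ is a curve from $y$ to $x$ in time $t$, so $A(\gamma\mid_{[-t,0]})\geq \phi(y,x,t)$; combined with the calibration equality $u(x)-u(y)=A(\gamma\mid_{[-t,0]})$ this yields $u(x)\geq u(y)+\phi(y,x,t)\geq T_tu(x)$, whence $u=T_tu$.

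For the converse, assume $u=T_tu$ for all $t\geq 0$. Condition 1 is immediate, since $u=T_tu$ in particular gives $u\leq T_tu$, so $u\in\calH$. The substance is condition 2. I would first check that for each fixed $x$ and $t>0$ the infimum defining $T_tu(x)$ is attained and realized by a minimizing curve. Coercivity follows by combining the subsolution bound $u(y)\geq u(x)-\eta\norm{x-y}^{1-\kappa}$ coming from \eqref{holder} with the kinetic lower bound $\phi(y,x,t)\geq \norm{x-y}^2/2t$ (Cauchy--Schwarz, using $U\geq 0$); since the resulting expression tends to $+\infty$ as $\norm{y}\to\infty$, the infimum is attained on a compact set, and Tonelli's theorem produces a minimizer $\gamma\colon[-t,0]\to V$ with $\gamma(0)=x$, $A(\gamma)=\phi(\gamma(-t),x,t)$ and $u(x)=u(\gamma(-t))+A(\gamma)$. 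Calibration on $[-t,0]$ then follows from a standard two-sided argument: for $s\in[-t,0]$ the subsolution property gives $u(x)-u(\gamma(s))\leq A(\gamma\mid_{[s,0]})$ and $u(\gamma(s))-u(\gamma(-t))\leq A(\gamma\mid_{[-t,s]})$, and adding these and comparing with $u(x)-u(\gamma(-t))=A(\gamma\mid_{[-t,0]})$ forces both inequalities to be equalities.

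It remains to pass from calibrated curves on arbitrarily long finite intervals to a single calibrated curve on $(-\infty,0]$, and this is where I expect the main difficulty. For each $n$ let $\gamma_n\colon[-n,0]\to V$ be a calibrated minimizer with $\gamma_n(0)=x$. The key a priori estimate is uniform: for $s\in[-n,0]$, calibration gives $A(\gamma_n\mid_{[s,0]})=u(x)-u(\gamma_n(s))\leq\eta\,\norm{\gamma_n(s)-x}^{1-\kappa}$ by \eqref{holder}, while the kinetic bound gives $A(\gamma_n\mid_{[s,0]})\geq\norm{\gamma_n(s)-x}^2/2\abs{s}$, so that $\norm{\gamma_n(s)-x}\leq(2\abs{s}\,\eta)^{1/(1+\kappa)}$ independently of $n$. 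This bounds the curves in $C^0$ on every compact subinterval and, through the same action bound, in $H^1$, yielding equicontinuity; a diagonal extraction then produces a subsequence converging uniformly on compacts to a continuous curve $\gamma\colon(-\infty,0]\to V$ with $\gamma(0)=x$. Finally I would recover calibration in the limit: lower semicontinuity of the action gives $A(\gamma\mid_{[s,0]})\leq\liminf_n A(\gamma_n\mid_{[s,0]})=u(x)-u(\gamma(s))$, while $u\in\calH$ gives the reverse inequality, so $u(x)-u(\gamma(s))=A(\gamma\mid_{[s,0]})$ for all $s\leq 0$.

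The delicate point throughout the converse is that the minimizers, and hence the limit curve, may run into collisions of the $N$-body potential, where $U$ blows up. The argument survives precisely because $\kappa\in(0,1)$ keeps the action finite through collisions, as quantified by the Hölder estimate \eqref{holder}, and because the action functional remains lower semicontinuous for the nonnegative, lower semicontinuous potential $U$; thus neither the compactness estimate nor the passage to the limit is destroyed by the singularities. Controlling this interaction between the a priori bounds and the collision set is, in my view, the real obstacle, whereas the existence of finite minimizers and the finite calibration identity are routine once coercivity is in hand.
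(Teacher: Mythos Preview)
The paper does not actually supply a proof of this proposition: at the start of Section~2 it announces that ``the proofs of the statements below can be found in \cite{Mad1}'', and Proposition~\ref{calibrating} is simply stated and used. So there is no in-paper argument to compare against.

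Your reconstruction is correct and is precisely the standard weak KAM argument one expects in \cite{Mad1}: the easy implication via $u\leq T_tu$ together with a calibrated segment giving $T_tu(x)\leq u(x)$; for the converse, coercivity of $y\mapsto u(y)+\phi(y,x,t)$ from the H\"older bound \eqref{holder} versus the kinetic lower bound (which is exactly the content of Lemma~\ref{lema: lower bound phixyt} in the paper), Tonelli existence of a minimizer, the sandwich argument for calibration on finite intervals, and a diagonal extraction with lower semicontinuity of $A$ to pass to $(-\infty,0]$. Your identification of the genuine subtlety --- that the singularities of $U$ do not spoil lower semicontinuity or the a priori estimates because $\kappa\in(0,1)$ keeps the action finite --- is also on target and is exactly the point emphasized in \cite{Mad1}.
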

Note that the curves $\gamma$ in the above proposition
are free time minimizers of the Lagrangian action because
we have
\[\phi(x,\gamma(t))\geq u(x)-u(\gamma(t))=
A(\gamma\mid_{[t,o]})\geq \phi(x,\gamma(t))\]
for all $t\leq 0$. In the Newtonian case it was proved in
\cite{daLuzMad} that they are motions of zero energy
and completely parabolic (for $t\to-\infty$).

\begin{definition}\label{def:smoothsol}
We will say that a function $u:V\to\R$ is a \emph{smooth solution} of
Hamilton-Jacobi equation \eqref{HJ} if it is differentiable
and satisfy the equation at every configuration $x$ such that
$U(x)<+\infty$ (at configurations $x$ without collisions).
\end{definition}

In the collinear case, that is when $\dim E=1$, we can have
discontinuous smooth solutions. The reason for this is that the
set of configuration without collisions has $n!$ connected components
and we can add to a given solution a different constant on each
component, which results in a new solution. When minimizing curves
avoid collisions (as happens in the Newtonian case, see \cite{Marchal})
we can deduce that a smooth solution $u$ must be a weak subsolution,
and must satisfy the Hölder estimate \eqref{holder}. Therefore,
in this case smooth solutions must be Hölder continuous at collision
configurations.
On the other hand, the differentiability of a given weak KAM solution
at some configuration without collisions $x\in V$ is equivalent
to the uniqueness of the calibrating curve given by proposition
\ref{calibrating}. This fact is of local nature and the proof
can be found in \cite{Fathibook}. The notion of calibrating
curve appears several times in what follows, and for this reason
we will now give a more general definition.

\begin{definition}
Given a weak subsolution $u\in\calH$ and a curve $\gamma:I\to V$,
we say that $\gamma$ calibrates $u$ if we have
$u(\gamma(b))-u(\gamma(a))=A(\gamma\mid_{[a,b]})$
whenever $[a,b]\subset I$.
\end{definition}

\section{Homogeneous solutions}

\subsection{Preliminaries and existence of weak homogeneous solutions}

Suppose that $u\in\calH$ is an homogeneous function
of degree $\alpha$ (for example every constant function
is in $\calH$ and homogeneous of degree $0$).
It is clear that if $u$ is differentiable at
some configuration $x$, and $\lambda>0$, then
$u$ is also differentiable at the configuration
 $\lambda\,x$ and $d_{\lambda\,x}u=\lambda^{\alpha-1}d_xu$.
Therefore if $u$ is a solution of Hamilton-Jacobi
equation \eqref{HJ} we must have
$\norm{d_xu}^2=2\,U(x)$,
and also
\[\lambda^{2(\alpha-1)}\,\norm{d_xu}^2=
\norm{d_{\lambda\,x}u}^2=2\,U(\lambda\,x)=
2\,\lambda^{-2\kappa}\,U(x)\]
from which we get that the degree of
homogeneity must be $\alpha=1-\kappa$.

Homogeneous functions can also be viewed as fixed
points of an action of the multiplicative group $\R^+$.
More precisely, for $\lambda>0$ and $u:V\to\R$
is a given function, we
can define the function $S_\lambda u$ by
\[S_\lambda u(x)=\lambda^{\kappa-1}\,u(\lambda\,x)\]
which defines the group action. Therefore,
a function $u$ is homogeneous
of degree $1-\kappa$ if and only if
$S_\lambda u=u$ for every $\lambda>0$.

When we reduce the rotational symmetries, one of
the main tools involved is the commutation of the
$O(E)$ action with the Lax-Oleinik semigroup.
For every pair of configurations
$x,y\in V$, for every $g\in O(E)$, and
for every $t>0$, we have that $\phi(gx,gy,t)=\phi(x,y,t)$.
Therefore, using the notation $gu$ for $u\circ g$,
we can write $T_t(gu)=g\,T_tu$ for every function
$u:V\to\R$ and every $t\geq 0$.
In particular, if $u\in\calH$, we have $u\leq T_tu$ for
all $t\geq 0$, hence
$gu\leq g\,T_tu=T_t(gu)$
which says that $gu\in\calH$. This also implies
that the set of invariant functions is preserved
by the Lax-Oleinik semigroup: if $u=gu$ then
$T_tu=T_t(gu)=g\,T_tu$. By this way we get that
the set of invariant weak KAM solutions is not empty.

We return now our attention to the reduction
of homotheties.
It is not difficult to see that the
group $(S_\lambda)_{\lambda>0}$ preserves
the set of weak subsolutions $\calH$
as well as the set of weak KAM solutions.
We will need the following lemma.

\begin{lemma}\label{homogeneityPHIxyt}
Given $x,y\in V$, $t>0$, and $\lambda>0$, we have
\[\phi(\lambda x,\lambda y, \lambda^{1+\kappa}t)
=\lambda^{1-\kappa}\,\phi(x,y,t)\,.\]
\end{lemma}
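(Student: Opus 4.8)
The plan is to exploit the homogeneity of the potential through an explicit rescaling of curves that simultaneously dilates space by the factor $\lambda$ and time by the factor $\lambda^{1+\kappa}$. Given any absolutely continuous curve $\gamma:[0,t]\to V$ joining $x$ to $y$, I would define the rescaled curve $\tilde\gamma:[0,\lambda^{1+\kappa}t]\to V$ by $\tilde\gamma(s)=\lambda\,\gamma(s/\lambda^{1+\kappa})$. Its endpoints are immediately $\tilde\gamma(0)=\lambda x$ and $\tilde\gamma(\lambda^{1+\kappa}t)=\lambda y$, so $\tilde\gamma$ is an admissible competitor for the infimum defining $\phi(\lambda x,\lambda y,\lambda^{1+\kappa}t)$; since the transformation is affine in the time variable and linear in space, it preserves the Sobolev class $H^1$ and maps finite-action curves to finite-action curves.

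The core computation tracks how the action transforms. Differentiating gives $\dot{\tilde\gamma}(s)=\lambda^{-\kappa}\,\dot\gamma(s/\lambda^{1+\kappa})$, hence $\dot{\tilde\gamma}(s)^2=\lambda^{-2\kappa}\,\dot\gamma(s/\lambda^{1+\kappa})^2$, while the homogeneity of $U$ yields $U(\tilde\gamma(s))=U(\lambda\,\gamma(s/\lambda^{1+\kappa}))=\lambda^{-2\kappa}\,U(\gamma(s/\lambda^{1+\kappa}))$. Both terms of the Lagrangian therefore acquire the same factor $\lambda^{-2\kappa}$, and the substitution $\tau=s/\lambda^{1+\kappa}$ (so that $ds=\lambda^{1+\kappa}\,d\tau$) in the action integral produces the combined factor $\lambda^{-2\kappa}\cdot\lambda^{1+\kappa}=\lambda^{1-\kappa}$, giving $A(\tilde\gamma)=\lambda^{1-\kappa}\,A(\gamma)$.

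Finally I would observe that the map $\gamma\mapsto\tilde\gamma$ is a bijection between the curves joining $x$ to $y$ in time $t$ and those joining $\lambda x$ to $\lambda y$ in time $\lambda^{1+\kappa}t$, its inverse being the same construction carried out with $\lambda^{-1}$ in place of $\lambda$. Consequently the identity $A(\tilde\gamma)=\lambda^{1-\kappa}\,A(\gamma)$ passes to the infima, and taking the infimum over all admissible curves on each side yields $\phi(\lambda x,\lambda y,\lambda^{1+\kappa}t)=\lambda^{1-\kappa}\,\phi(x,y,t)$. No step presents a genuine obstacle; the only points requiring care are the exponent bookkeeping, which must produce exactly $\lambda^{1-\kappa}$, and the surjectivity of the correspondence of curves (supplied by the inverse scaling), which is what upgrades a single inequality into the desired equality of infima.
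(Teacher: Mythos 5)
Your proof is correct and follows essentially the same route as the paper: the same space--time rescaling $\gamma\mapsto\lambda\,\gamma(\,\cdot\,/\lambda^{1+\kappa})$, the same computation $A(\tilde\gamma)=\lambda^{1-\kappa}A(\gamma)$, and the same use of the inverse scaling by $\lambda^{-1}$ to turn one inequality between infima into an equality. The exponent bookkeeping is right, so nothing needs to be added.
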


\begin{proof}
Let $\gamma:[0,t]\to V$ be an absolutely continuous curve
such that $\gamma(0)=x$ and $\gamma(t)=y$. Define the
curve $\gamma_\lambda$ on the interval $[0,\lambda^{1+\kappa}t]$
by
\[\gamma_\lambda(s)=\lambda\,\gamma(\lambda^{-(1+\kappa)}s)\,.\]
A simple computation shows that
$A(\gamma_\lambda)=\lambda^{1-\kappa}A(\gamma)$. Note that
the curve $\gamma_\lambda$ goes from $\lambda x$ to $\lambda y$.
Taking a minimizing sequence for the Lagrangian action
in the set of curves going from $x$ to $y$ in time $t$ we
deduce that the inequality
\[\phi(\lambda x,\lambda y, \lambda^{1+\kappa}t)
\leq\lambda^{1-\kappa}\,\phi(x,y,t)\]
is always verified. Therefore the reverse inequality
is also verified, since we have
\[\phi(x,y,t)=
\phi(\lambda^{-1}\lambda x,\lambda^{-1}\lambda y,
\lambda^{-(1+\kappa)}\lambda^{(1+\kappa)}t)\leq
\lambda^{-(1-\kappa)}
\phi(\lambda x,\lambda y, \lambda^{1+\kappa}t)\,.\]
\end{proof}

We will see now that, although the two actions do not
commute, there is a natural relation between them.

\begin{proposition}\label{commute}
For any $u:V\to\R$, $\lambda>0$ and $t\geq 0$ we have
\[T_t\,S_\lambda u = S_\lambda\,T_{\lambda^{(1+\kappa)}t}u\,.\]
\end{proposition}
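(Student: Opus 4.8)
The plan is to unwind both sides directly from the definitions of $S_\lambda$ and the Lax-Oleinik semigroup, using Lemma \ref{homogeneityPHIxyt} as the single nontrivial ingredient. Everything reduces to a change of variables in the infimum that defines $T_t$, so the strategy is computational rather than conceptual: write out $T_t S_\lambda u(x)$, substitute the scaling of $\phi$, and recognize the result as $S_\lambda T_{\lambda^{1+\kappa}t}u(x)$.

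First I would dispose of the trivial case $t=0$, where both sides equal $S_\lambda u$ since $T_0$ is the identity. For $t>0$ and a fixed configuration $x\in V$, I would start from
\[
T_t S_\lambda u(x)=\inf_{y\in V}\bigl(S_\lambda u(y)+\phi(y,x,t)\bigr)
=\inf_{y\in V}\bigl(\lambda^{\kappa-1}u(\lambda y)+\phi(y,x,t)\bigr).
\]
The key step is to pass to the new variable $z=\lambda y$, which ranges over all of $V$ as $y$ does since $\lambda>0$ (here one uses that $V$ is a linear subspace, hence invariant under scaling). Under this substitution $u(\lambda y)=u(z)$, and I need to rewrite the term $\phi(y,x,t)=\phi(\lambda^{-1}z,x,t)$ so that its first two arguments become $z$ and $\lambda x$. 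Applying Lemma \ref{homogeneityPHIxyt} with the pair $(\lambda^{-1}z,x)$ scaled by $\lambda$ gives
\[
\phi(z,\lambda x,\lambda^{1+\kappa}t)=\lambda^{1-\kappa}\,\phi(\lambda^{-1}z,x,t),
\]
so that $\phi(y,x,t)=\lambda^{\kappa-1}\,\phi(z,\lambda x,\lambda^{1+\kappa}t)$.

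Substituting this back, the infimum becomes
\[
T_t S_\lambda u(x)
=\inf_{z\in V}\lambda^{\kappa-1}\bigl(u(z)+\phi(z,\lambda x,\lambda^{1+\kappa}t)\bigr)
=\lambda^{\kappa-1}\,T_{\lambda^{1+\kappa}t}u(\lambda x),
\]
where I pulled the positive constant $\lambda^{\kappa-1}$ out of the infimum and then recognized the bracketed infimum over $z$ as precisely the definition of $T_{\lambda^{1+\kappa}t}u$ evaluated at $\lambda x$. The final right-hand side is, by the very definition of $S_\lambda$, equal to $S_\lambda T_{\lambda^{1+\kappa}t}u(x)$, which completes the identity.

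I do not anticipate a serious obstacle: the only point requiring care is the bookkeeping of the exponents of $\lambda$ in applying Lemma \ref{homogeneityPHIxyt}, making sure the time is scaled by exactly $\lambda^{1+\kappa}$ and the action by exactly $\lambda^{1-\kappa}$ in the correct direction. One should also confirm that the change of variables $z=\lambda y$ is a legitimate bijection of the index set of the infimum, which is immediate since scaling by a positive constant is a bijection of the linear space $V$; this guarantees that no infimum is lost or gained under the substitution.
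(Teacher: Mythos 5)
Your proposal is correct and follows essentially the same route as the paper: both unwind $T_tS_\lambda u(x)$ from the definitions, factor out $\lambda^{\kappa-1}$, and use Lemma \ref{homogeneityPHIxyt} to convert $\lambda^{1-\kappa}\phi(y,x,t)$ into $\phi(\lambda y,\lambda x,\lambda^{1+\kappa}t)$, recognizing the result as $S_\lambda T_{\lambda^{1+\kappa}t}u(x)$. The explicit substitution $z=\lambda y$ and the remark that it is a bijection of $V$ are just a slightly more pedantic phrasing of the paper's reindexing of the infimum; the exponent bookkeeping checks out.
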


\begin{proof}
For each $x\in V$ we have
\begin{eqnarray*}
T_t\,S_\lambda u (x)&=&
\inf\set{\lambda^{\kappa-1} u(\lambda y)+\phi(y,x,t)\mid y\in V}\\
&=&\lambda^{\kappa-1}\inf\set{u(\lambda y)+
\lambda^{1-\kappa}\phi(y,x,t)\mid y\in V}\\
&=&\lambda^{\kappa-1}\inf\set{u(\lambda y)+
\phi(\lambda y,\lambda x,\lambda^{1+\kappa}t)\mid y\in V}\\
&=&\lambda^{\kappa-1}\,T_{\lambda^{(1+\kappa)}t}u(\lambda x)=
S_\lambda\,T_{\lambda^{(1+\kappa)}t}u (x)
\end{eqnarray*}
\end{proof}

From this relation we can deduce that the
group $(S_\lambda)_{\lambda>0}$ preserves both
the set of weak subsolutions
and the set of weak KAM solutions.

\begin{corollary}
If $u\in\calH$ then $S_\lambda u\in\calH$ for all $\lambda>0$.
\end{corollary}

\begin{proof}
Fix $\lambda>0$ and suppose that $u\leq T_tu$ for any $t\geq 0$.
Therefore we have
$S_\lambda u \leq S_\lambda\,T_tu$. Using proposition \ref{commute}
we get that
\[S_\lambda u \leq T_{\lambda^{-(1+\kappa)}t}\,S_\lambda u\]
which is equivalent to say that $S_\lambda u\leq T_t\,S_\lambda u$
for any $t\geq 0$.
\end{proof}

\begin{corollary}\label{SpreserveWKAM}
If $u\in\calH$ is a weak KAM solution, then $S_\lambda u$ is also
a weak KAM solution for any $\lambda>0$.
\end{corollary}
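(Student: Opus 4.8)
The plan is to verify directly that $S_\lambda u$ satisfies the defining property of a weak KAM solution, namely that it is a fixed point of the entire Lax-Oleinik semigroup. Since $u$ is a weak KAM solution, we have $T_s u = u$ for every $s \geq 0$. Fixing $\lambda > 0$ and an arbitrary $t \geq 0$, I would simply apply Proposition \ref{commute}, which gives
\[
T_t\, S_\lambda u = S_\lambda\, T_{\lambda^{1+\kappa}t}\, u\,.
\]
Because $\lambda^{1+\kappa}t \geq 0$, the fixed-point property of $u$ yields $T_{\lambda^{1+\kappa}t}\, u = u$, and substituting this into the displayed identity gives $T_t\, S_\lambda u = S_\lambda u$. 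As $t \geq 0$ was arbitrary, this is precisely the assertion that $S_\lambda u$ is a weak KAM solution.

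The only point that deserves a remark is that the reparametrization $t \mapsto \lambda^{1+\kappa}t$ does not spoil the argument: since $\lambda^{1+\kappa} > 0$, this map sends $[0,+\infty)$ onto itself, so the equality $T_{\lambda^{1+\kappa}t}\, u = u$ is available for exactly the range of times we need. I would also note that membership of $S_\lambda u$ in $\calH$ is already guaranteed by the preceding corollary (and in any case follows automatically from being a fixed point), so the entire content here is the fixed-point computation.

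There is essentially no obstacle to overcome: all the substantive work has been done in establishing the commutation relation of Proposition \ref{commute} and the homogeneity of the action $\phi$ in Lemma \ref{homogeneityPHIxyt}. Once those are in hand, the corollary is a one-line consequence, completely parallel to the way the $O(E)$-invariance of weak KAM solutions was deduced earlier from the genuine commutation $T_t(gu) = g\,T_t u$. The mild twist is only that the homothety action $S_\lambda$ commutes with the semigroup up to the time rescaling by $\lambda^{1+\kappa}$, and the proof shows this rescaling is harmless for fixed points precisely because it preserves the nonnegative time axis.
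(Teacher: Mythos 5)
Your argument is correct and is exactly the paper's proof: apply Proposition \ref{commute} to get $T_t\,S_\lambda u = S_\lambda\,T_{\lambda^{1+\kappa}t}u$ and then use the fixed-point property of $u$ at the rescaled (still nonnegative) time. Nothing to add.
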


\begin{proof}
Fix $\lambda>0$ and $u\in\calH$ such that $u=T_tu$ for any $t\geq 0$.
Therefore we have
\[T_t\,S_\lambda u=S_\lambda\,T_{\lambda^{(1+\kappa)}t}u=
S_\lambda u\]
for any $t\geq 0$, which says that $S_\lambda u$ is a weak
KAM solution.
\end{proof}

Now we are able to prove our first existence result.

\begin{theorem}\label{existHweakKAM}
If $\kappa\in (0,1)$ the set of homogeneous weak KAM
solutions of the $N$-body problem with homogeneous potential
of degree $-2\kappa$ is not empty.
\end{theorem}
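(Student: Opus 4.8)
The plan is to manufacture the desired solution by ``averaging'' a given weak KAM solution over the homothety orbit, but using an infimum rather than an integral so as to remain inside the class of solutions. First I would invoke the weak KAM theorem to obtain any weak KAM solution $u_0$, and normalize it by adding a constant so that $u_0(0)=0$ (adding a constant clearly preserves the fixed-point property of the Lax-Oleinik semigroup). By Corollary \ref{SpreserveWKAM} every member of the homothety orbit $S_\lambda u_0$ is again a weak KAM solution, and since $S_\lambda u_0(0)=\lambda^{\kappa-1}u_0(0)=0$ the whole orbit is pinned at the origin.

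Next I would set $u_*(x)=\inf_{\lambda>0}S_\lambda u_0(x)=\inf_{\lambda>0}\lambda^{\kappa-1}u_0(\lambda x)$. At $x=0$ this infimum equals $0$, so Lemma \ref{infimoSUBsol} applies and guarantees that $u_*$ is finite at every configuration and belongs to $\calH$. The homogeneity of $u_*$ is then a one-line reparametrization: computing $S_\mu u_*(x)=\mu^{\kappa-1}\inf_{\lambda>0}\lambda^{\kappa-1}u_0(\lambda\mu x)$ and substituting $\nu=\lambda\mu$ collapses the $\mu$-factors and returns $u_*(x)$, so $S_\mu u_*=u_*$ for all $\mu>0$; that is, $u_*$ is homogeneous of degree $1-\kappa$.

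The decisive point is to show that this infimum is a genuine weak KAM solution and not merely a subsolution. Since $u_*\in\calH$ we already have $u_*\le T_t u_*$ for every $t\ge 0$. For the reverse inequality I would exploit the monotonicity of the Lax-Oleinik semigroup: from $u_*\le S_\lambda u_0$ we obtain $T_t u_*\le T_t(S_\lambda u_0)=S_\lambda u_0$, the last equality holding because $S_\lambda u_0$ is itself a fixed point of $(T_t)$. Taking the infimum over $\lambda>0$ yields $T_t u_*\le u_*$, whence $T_t u_*=u_*$ and $u_*$ is the sought homogeneous weak KAM solution.

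The main obstacle is precisely this last step: infima of weak KAM solutions are a priori only subsolutions, because the fixed-point property of $(T_t)$ is not preserved under arbitrary infima. What rescues the argument is that each orbit element $S_\lambda u_0$ is \emph{individually} $T_t$-invariant, so bare monotonicity — with no need to commute $T_t$ past the infimum — delivers the missing inequality $T_t u_*\le u_*$. The role of Proposition \ref{commute} is upstream, through Corollary \ref{SpreserveWKAM}, which is what makes the entire homothety orbit consist of solutions to begin with.
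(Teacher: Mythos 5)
Your proposal is correct and follows essentially the same route as the paper: both take the infimum over the homothety orbit $\inf_{\lambda>0}S_\lambda u_0$ of a normalized weak KAM solution, use Lemma \ref{infimoSUBsol} to stay in $\calH$, use monotonicity of $T_t$ together with the fact that each $S_\lambda u_0$ is itself a fixed point (Corollary \ref{SpreserveWKAM}) to get $T_t u_*\le u_*$, and verify homogeneity by the substitution $\nu=\lambda\mu$. Your closing remark correctly identifies the one point where a naive ``infimum of solutions'' argument could fail and why it does not here.
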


\begin{proof}
From the weak KAM theorem proved in \cite{Mad1} we know
that for $\kappa\in(0,1)$ there exists a weak KAM solution
$u\in\calH$. Moreover, adding a constant to $u$ we can assume
that $u\in\calH_0$, that is to say, that $u(0)=0$.
Since $S_\lambda u(0)=0$ for every $\lambda>0$, we can
apply lemma \ref{infimoSUBsol} to define $u_0\in\calH$ as
\[u_0=\inf_{\lambda>0}S_\lambda u\,.\]
Thus we have $u_0\leq T_tu_0$ for all $t\geq0$. On the other
hand, since for each $\lambda>0$ we have $u_0\leq S_\lambda u$,
we also have $T_tu_0\leq T_t\,S_\lambda u$, and we deduce
that
\[T_tu_0\leq \inf_{\lambda>0}T_t\,S_\lambda u\,.\]
Therefore lemma \ref{SpreserveWKAM} implies that
$T_tu_0\leq u_0$ for all $t\geq 0$. We have proved that $u_0$ is a
weak KAM solution. It remains to prove that $u_0$
is homogeneous. For each $\eta>0$ we have
\begin{eqnarray*}
S_\eta u_0&=&\eta^{\kappa-1}\inf_{\lambda>0}S_\lambda u(\eta x)\\
&=&\eta^{\kappa-1}\inf_{\lambda>0}\lambda^{\kappa-1} u(\lambda\,\eta x)\\
&=&\inf_{\lambda>0}(\lambda\,\eta)^{\kappa-1} u(\lambda\,\eta x)\\
&=&\inf_{\lambda>0}S_{\lambda\,\eta} u(x)=u_0(x)
\end{eqnarray*}
which prove that $u_0$ is homogeneous.
\end{proof}

\subsection{Hamilton-Jacobi equation on the sphere.}

Let $u:V\to\R$ be a smooth solution of Hamilton-Jacobi
equation \eqref{HJ}. Let $v:S\to\R$ be the restriction
of $u$ to the unit sphere $S$. If $u$ is homogeneous we have
\[u(\lambda\,s)=\lambda^{1-\kappa}v(s)\]
for all $s\in S$ and all $\lambda>0$. Note that the Riemannian metric
given by the mass inner product in $V$ splits in polar
coordinates $(\lambda,s)$ as
\[dx^2=d\lambda^2+\lambda^2\,ds^2\]
therefore
\[\norm{d_{(\lambda\,s)}u}^2=
\norm{\frac{\partial u}{\partial\lambda}(\lambda\,s)}^2
+\frac{1}{\lambda^2}
\norm{\frac{\partial u}{\partial s}(\lambda\,s)}^2\]
since we are taking the dual norm in the cotangent bundle.
The partial derivatives are
\[\frac{\partial u}{\partial\lambda}(\lambda\,s)=
(1-\kappa)\lambda^{-\kappa}v(s)\;\;\textrm{ and }\;\;
\frac{\partial u}{\partial s}(\lambda\,s)=
\lambda^{1-\kappa}d_sv\]
thus the Hamilton-Jacobi equation \eqref{HJ} can be written
\[(1-\kappa)^2\lambda^{-2\kappa}v(s)^2+
\lambda^{-2\kappa}\norm{d_sv}^2=2\,U(\lambda\,s)\,.\]
Since $U$ is homogeneous of degree $-2\kappa$, the equation
in $v$ is the Hamilton-Jacobi equation \eqref{HJH}
\begin{equation*}
(1-\kappa)^2\,v(s)^2+\norm{d_sv}^2=2\,U(s)\,.
\end{equation*}

\subsection{Viscosity solutions}\label{visco}

We start this section recalling briefly the well known
notion of viscosity solution of a first order Hamilton-Jacobi
equation of the form
\begin{equation}\label{GeneralHJ}
\H(x,d_xu,u)=0
\end{equation}
introduced by \mbox{M. Crandall},
\mbox{L. Evans} and \mbox{P.-L. Lions}
(see for instance \cite{Crandall-Lions}, \cite{Crandall-Evans-Lions}).
We will assume that $\H$, the Hamiltonian,
is a continuous function defined
on $T^*M\times\R$ where $M$ is a compact smooth manifold,
and moreover, that $\H$ is smooth outside a singular set
of the form $T^*\Delta\times\R$ where $\H=+\infty$
($\Delta\subset M$).

\begin{definition}\label{viscodef}
Let $u:M\to\R$ be a continuous function, and $x_0\in M$.
\begin{itemize}
  \item[-] $u$ is a \emph{viscosity subsolution of
           \eqref{GeneralHJ} at $x_0$} if for every
           $\varphi\in C^1(M)$ such that $\varphi(x_0)=u(x_0)$
           and $\varphi\geq u$ in a neighborhood of $x_0$ we have
           \begin{equation*}
             \H(x,d_{x_0}\varphi,\varphi(x_0))\leq 0
           \end{equation*}
  \item[-] $u$ is a \emph{viscosity supersolution of
           \eqref{GeneralHJ} at $x_0$} if for every
           $\varphi\in C^1(M)$ such that $\varphi(x_0)=u(x_0)$
           and $\varphi\leq u$ in a neighborhood of $x_0$ we have
           \begin{equation*}
             \H(x,d_{x_0}\varphi,\varphi(x_0))\geq 0
           \end{equation*}
  \item[-] $u$ is a \emph{viscosity solution of
           \eqref{GeneralHJ} at $x_0$} if it is both
           viscosity subsolution and viscosity supersolution
           at $x_0$.
  \item[-] $u$ is a \emph{viscosity solution of
           \eqref{GeneralHJ}} if it is a viscosity solution at
           each point $x_0\in M$.
\end{itemize}
\end{definition}

\begin{theorem}\label{existsViscSol2}
Let $u\in\calH$ be an homogeneous weak KAM solution of the $N$-body problem,
and $v$ the restriction of $u$ to the unit sphere $S$.
Then $v$ is a viscosity solution of the Hamilton-Jacobi equation
\eqref{HJH}.
\end{theorem}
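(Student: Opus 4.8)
The plan is to transfer the viscosity sub- and supersolution properties of $u$ on $V$ to $v$ on $S$ by lifting test functions from the sphere to the ambient space via the homogeneous extension of degree $1-\kappa$. The input is that $u$, being a weak KAM solution, is a viscosity solution of \eqref{HJ} on $V$; combining this with the polar splitting of the metric already carried out above will produce exactly the Hamiltonian of \eqref{HJH} on $S$. Throughout I write the target Hamiltonian as $\H(s,p,w)=(1-\kappa)^2w^2+\norm{p}^2-2\,U(s)$, so that \eqref{HJH} is $\H(s,d_sv,v)=0$, with singular set $\Delta\subset S$ the collision configurations where $U=+\infty$.

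First I would fix a non-collision point $s_0\in S$ and an arbitrary test function $\varphi\in C^1(S)$ with $\varphi(s_0)=v(s_0)$, and define its homogeneous extension $\Phi:V\setminus\set{0}\to\R$ by $\Phi(\lambda\,s)=\lambda^{1-\kappa}\varphi(s)$ for $\lambda>0$, $s\in S$. Since the polar map $(\lambda,s)\mapsto\lambda\,s$ is a diffeomorphism from $(0,+\infty)\times S$ onto $V\setminus\set{0}$ and $\lambda^{1-\kappa}$ is smooth for $\lambda>0$, the function $\Phi$ is $C^1$ near $s_0$. Moreover $\Phi(s_0)=\varphi(s_0)=v(s_0)=u(s_0)$, and if $\varphi\geq v$ (resp. $\varphi\leq v$) on a neighborhood of $s_0$ in $S$, then $\Phi\geq u$ (resp. $\Phi\leq u$) on the open cone over that neighborhood, which contains a full neighborhood of $s_0$ in $V$. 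Hence $\Phi$ is an admissible test function for $u$ at $s_0$, touching from the same side as $\varphi$.

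Next I would invoke the viscosity property of $u$ for \eqref{HJ}. When $\varphi$ touches $v$ from above the subsolution inequality gives $\norm{d_{s_0}\Phi}^2\leq 2\,U(s_0)$, and when $\varphi$ touches from below the supersolution inequality gives $\norm{d_{s_0}\Phi}^2\geq 2\,U(s_0)$. Repeating the polar computation of the previous subsection at $\lambda=1$, the radial derivative of $\Phi$ is $(1-\kappa)\varphi(s_0)=(1-\kappa)v(s_0)$ and the spherical part is $d_{s_0}\varphi$, so that
\[\norm{d_{s_0}\Phi}^2=(1-\kappa)^2\,v(s_0)^2+\norm{d_{s_0}\varphi}^2\,.\]
Substituting into the two inequalities and using $\varphi(s_0)=v(s_0)$ yields precisely $\H(s_0,d_{s_0}\varphi,\varphi(s_0))\leq 0$ in the subsolution case and $\geq 0$ in the supersolution case, which is the viscosity solution condition for \eqref{HJH} at every non-collision $s_0$.

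The hard part will be the collision configurations $s_0\in\Delta$, where $U(s_0)=+\infty$ and the clean derivative computation is unavailable. There the subsolution condition for $v$ is vacuous, since $\H=-\infty$, while the supersolution condition demands that no $C^1$ function touch $v$ from below at $s_0$. I would obtain this from the same test-function correspondence: a subtangent $\varphi$ for $v$ at $s_0$ would lift to a subtangent $\Phi$ for $u$ at $s_0$, contradicting the fact that $u$ is a viscosity supersolution of \eqref{HJ} at the collision $s_0$ (equivalently, that $v$ inherits from $u$ the Hölder continuity \eqref{holder} and the nonexistence of one-sided smooth supports at collisions). Thus the correspondence handles $\Delta$ as well, and the only genuinely delicate point is verifying that this nonexistence of subtangents transfers faithfully through the homogeneous lift.
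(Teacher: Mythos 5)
Your proposal is correct and follows essentially the same route as the paper: lift a $C^1$ test function $\varphi$ on $S$ to its degree-$(1-\kappa)$ homogeneous extension on $V$, observe that it touches $u$ from the same side, apply the viscosity inequality for \eqref{HJ}, and use the polar splitting of the dual norm to read off $(1-\kappa)^2v(s_0)^2+\norm{d_{s_0}\varphi}^2$. Your extra paragraph on the collision set $\Delta$ (reducing the supersolution condition there to the nonexistence of subtangents of $u$ at collisions, transferred through the same lift) is a point the paper's proof passes over silently, and it is handled correctly.
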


\begin{proof}
We know that $u$ is a viscosity solution of Hamilton-Jacobi equation \eqref{HJ}.
Suppose that $\varphi\in C^1(S)$ is such that $\varphi\geq v$ and that
$\varphi(s)=v(s)$. If $\psi$ is the homogeneous extension of $\varphi$
to $V$ of degree $1-\kappa$, then we have that $\psi\geq u$ in a
neighborhood of $s$ and
$\psi(\lambda s)=u(\lambda s)=\lambda^{1-\kappa}\varphi(s)$
for all $\lambda>0$. We also have that
\[\frac{\partial\psi}{\partial s}(s)=d_s\varphi\,.\]
Thus, since $u$ is a viscosity subsolution at $s$ we have
\[\norm{d_s\psi}^2=(1-\kappa)^2\varphi(s)^2+
\norm{d_s\varphi}^2\leq 2\,U(s)\]
and we conclude that $v$ is a viscosity subsolution of \eqref{HJH} at $s$.
A similar argument proves that $v$ is also a viscosity supersolution.
\end{proof}

\subsection{Calibrating curves of homogeneous solutions}

Weak KAM solutions come with a lamination of calibrating
curves, as it was explained in proposition \ref{calibrating} above.
We start showing that the homogeneity of a weak KAM solution
implies an invariance
property of such calibrating curves.

\begin{lemma}\label{lema:calibrating of homogeneous}
If a weak KAM solution $u$ is homogeneous then
the set of calibrating curves is invariant under the
action of $\R^+$ given by
\[\gamma\mapsto\gamma_\lambda\;\;\;
\gamma_\lambda(t)=\lambda\,\gamma(\lambda^{-(1+\kappa)}t)\]
for any $\lambda>0$.
\end{lemma}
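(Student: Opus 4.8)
The plan is to show that if $\gamma$ calibrates the homogeneous weak KAM solution $u$, then so does the rescaled curve $\gamma_\lambda(t)=\lambda\,\gamma(\lambda^{-(1+\kappa)}t)$. Recall from the definition that $\gamma$ calibrates $u$ means $u(\gamma(b))-u(\gamma(a))=A(\gamma\mid_{[a,b]})$ for every subinterval $[a,b]$ of the domain. The two essential ingredients are already available: the action-rescaling computation from Lemma \ref{homogeneityPHIxyt} (namely that the curve $s\mapsto\lambda\,\gamma(\lambda^{-(1+\kappa)}s)$ has action $\lambda^{1-\kappa}$ times that of $\gamma$), and the homogeneity of $u$ of degree $1-\kappa$, which gives $u(\lambda x)=\lambda^{1-\kappa}u(x)$.

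First I would fix $\lambda>0$ and a calibrating curve $\gamma$, and take any interval $[a,b]$ in the domain of $\gamma_\lambda$. Setting $a'=\lambda^{-(1+\kappa)}a$ and $b'=\lambda^{-(1+\kappa)}b$, the restriction $\gamma_\lambda\mid_{[a,b]}$ is precisely the rescaling of $\gamma\mid_{[a',b']}$ under the transformation $\gamma\mapsto\gamma_\lambda$ used in the proof of Lemma \ref{homogeneityPHIxyt}. The action computation there (which depends only on the homogeneity of the potential, not on the endpoints) gives
\[
A(\gamma_\lambda\mid_{[a,b]})=\lambda^{1-\kappa}\,A(\gamma\mid_{[a',b']})\,.
\]
Then I would use the calibration identity for $\gamma$ on $[a',b']$ together with homogeneity of $u$:
\[
A(\gamma_\lambda\mid_{[a,b]})=\lambda^{1-\kappa}\bigl(u(\gamma(b'))-u(\gamma(a'))\bigr)
=u(\lambda\gamma(b'))-u(\lambda\gamma(a'))=u(\gamma_\lambda(b))-u(\gamma_\lambda(a))\,,
\]
where the last equality is just $\gamma_\lambda(b)=\lambda\gamma(b')$ and $\gamma_\lambda(a)=\lambda\gamma(a')$. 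Since $[a,b]$ was arbitrary, $\gamma_\lambda$ calibrates $u$, which is exactly the desired invariance.

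I do not expect a serious obstacle here, since both tools are in hand; the only care needed is bookkeeping with the time-reparametrization (tracking how $[a,b]$ in the new time corresponds to $[a',b']$ in the old time) and verifying that the action-rescaling of Lemma \ref{homogeneityPHIxyt} applies to an arbitrary subinterval rather than only to the full curve between two endpoints. The latter is immediate because that computation is local in the curve and uses only the degree $-2\kappa$ homogeneity of $U$. One should also note that $u$ itself is homogeneous by hypothesis, so $u=S_\lambda u$ for all $\lambda$; this is what licenses replacing $u(\lambda x)$ by $\lambda^{1-\kappa}u(x)$ at the endpoints. The symmetry of the argument in $\lambda$ and $\lambda^{-1}$ (as in Lemma \ref{homogeneityPHIxyt}) confirms that the set of calibrating curves is genuinely invariant, not merely mapped into a larger set.
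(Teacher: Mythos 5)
Your proof is correct and follows essentially the same route as the paper: rescale the action via the computation of Lemma \ref{homogeneityPHIxyt}, apply the calibration identity on the corresponding subinterval of the original curve, and use the degree $1-\kappa$ homogeneity of $u$ at the endpoints (the paper phrases it with intervals $[0,t]$ and $[0,t^*]$, but the bookkeeping is identical). Your closing remark that invariance, rather than mere inclusion, follows by applying the argument to $\lambda^{-1}$ is a small point the paper leaves implicit.
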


\begin{proof}
Suppose that $u$ is homogeneous and that
$\gamma:[0,+\infty)\to V$ calibrates $u$.
Fix $\lambda>0$, and note that the curve $\gamma_\lambda$
is also defined in $[0,+\infty)$.
If we write $x=\gamma(0)$ and $y=\gamma(t)$ for some
value of $t>0$, we have that
\[u(x)-u(y)=A(\gamma\mid_{[0,t]})\,.\]
On the other hand, if we write $t^*=\lambda^{(1+\kappa)}\,t$
we have that
\[\gamma_\lambda(0)=\lambda x\;\;,\;\;\;\;
\gamma_\lambda(t^*)=\lambda y\]
and
\[A(\gamma_\lambda\mid_{[0,t^*]})=
\lambda^{1-\kappa}\,A(\gamma\mid_{[0,t]})\,.\]
Therefore, since $u$ is homogeneous of degree $1-\kappa$,
we conclude that
\[u(\lambda x)-u(\lambda y)=\lambda^{1-\kappa}(u(x)-u(y))
=\lambda^{1-\kappa}\,A(\gamma\mid_{[0,t]})
=A(\gamma_\lambda\mid_{[0,t^*]})\]
which proves that the curve $\gamma_\lambda$ is also calibrating.
\end{proof}

We will denote $\pi:V\setminus\set{0}\to S$ the projection
$\pi(x)=I(x)^{-1/2}\,x$, and $\Omega$ will denote the open and dense set
of configurations without collisions.

\begin{theorem}\label{rho-sigma-eq}
Let $u\in\calH$ be an homogeneous weak KAM solution of the $N$-body problem,
and $v$ the restriction of $u$ to the unit sphere $S$. If
$\gamma:(a,b)\to \Omega$ is a calibrating curve for $u$,
$\rho=I(\gamma)^{1/2}$ and
$\sigma=\pi\circ\gamma$, then for all $t\in (a,b)$ we have that
$v$ is differentiable on $\sigma(t)$ and

\begin{enumerate}
  \item $\dot\rho=(1-\kappa)\rho^{-\kappa}v(\sigma)$\vspace{2mm}
  \item $d_{\sigma(t)}v(\nu)=\inner{\nu}{\rho(t)^{\kappa}\dot\sigma(t)}
\;\;\textrm{ for all }\nu\in T_{\sigma(t)}S$
\end{enumerate}
\end{theorem}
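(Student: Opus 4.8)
The plan is to read off both identities from the single vectorial relation $\nabla u(\gamma(t))=\dot\gamma(t)$ along the calibrating curve, after decomposing everything in the polar coordinates $x=\rho\,\sigma$ already used in the subsection on the Hamilton--Jacobi equation on the sphere. Since $\gamma$ is a calibrating curve of a weak KAM solution, it is a free time minimizer lying in the collision-free set $\Omega$; hence it is a smooth extremal of the action and has zero energy, so that $\norm{\dot\gamma(t)}^2=2\,U(\gamma(t))$ for every $t\in(a,b)$ (see \cite{daLuzMad}). Writing $\gamma=\rho\,\sigma$ with $\rho=I(\gamma)^{1/2}>0$ and $\sigma=\pi\circ\gamma\in S$, and recalling $u(\rho\,\sigma)=\rho^{1-\kappa}v(\sigma)$, I would differentiate to get $\dot\gamma=\dot\rho\,\sigma+\rho\,\dot\sigma$, where $\dot\sigma\in T_\sigma S$ is orthogonal to $\sigma$ because $I(\sigma)\equiv 1$.

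First I would settle differentiability and the momentum identity. Because $t\in(a,b)$ is an interior point, $\gamma(t)$ lies in the interior of a calibrating curve, so by the differentiability criterion for weak KAM solutions recalled above (see \cite{Fathibook}) the solution $u$ is differentiable at $\gamma(t)$. The value of its differential is then forced: calibration gives $\frac{d}{dt}u(\gamma)=\frac{1}{2}\norm{\dot\gamma}^2+U(\gamma)=\norm{\dot\gamma}^2$ by the zero energy relation, that is $\inner{\nabla u(\gamma)}{\dot\gamma}=\norm{\dot\gamma}^2$; on the other hand \eqref{HJ} gives $\norm{\nabla u(\gamma)}^2=2\,U(\gamma)=\norm{\dot\gamma}^2$, so that $\nabla u(\gamma)$ and $\dot\gamma$ have equal norm and inner product equal to that norm squared, whence $\nabla u(\gamma(t))=\dot\gamma(t)$ by the equality case of Cauchy--Schwarz. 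Finally, since $u$ is homogeneous, differentiability at $\rho(t)\sigma(t)$ (with $\rho(t)>0$) propagates along the ray to $\sigma(t)$, and restricting to $S$ shows that $v$ is differentiable at $\sigma(t)$, which is the first assertion.

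It then remains to split $\nabla u(\gamma(t))=\dot\gamma(t)$ into its radial and spherical parts. From $u(\rho\,\sigma)=\rho^{1-\kappa}v(\sigma)$ a direct computation of $d_{\rho\,\sigma}u$, of the kind performed before \eqref{HJH}, gives
\[\inner{\nabla u(\rho\,\sigma)}{\sigma}=(1-\kappa)\,\rho^{-\kappa}v(\sigma),\qquad \inner{\nabla u(\rho\,\sigma)}{\nu}=\rho^{-\kappa}\,d_\sigma v(\nu)\quad(\nu\in T_\sigma S).\]
Pairing $\nabla u=\dot\gamma$ with the unit radial vector $\sigma$ yields $\dot\rho=\inner{\dot\gamma}{\sigma}=(1-\kappa)\rho^{-\kappa}v(\sigma)$, which is identity (1). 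Pairing with an arbitrary $\nu\in T_\sigma S$ and using $\dot\sigma\perp\sigma$ yields $\rho^{-\kappa}d_\sigma v(\nu)=\inner{\dot\gamma}{\nu}=\rho\,\inner{\dot\sigma}{\nu}$, hence $d_\sigma v(\nu)=\inner{\nu}{\rho^{1+\kappa}\dot\sigma}$, which is identity (2).

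The main obstacle is the step identifying $d_{\gamma(t)}u$ with the velocity $\dot\gamma(t)$: this is where one must genuinely use weak KAM theory, namely that interior points of a calibrating curve are points of differentiability and that the differential there is the Legendre dual of the velocity. The two energy relations (zero energy of the minimizer and the Hamilton--Jacobi equation for $u$) are what make the Cauchy--Schwarz estimate collapse to an equality. Everything afterwards is the bookkeeping of the polar splitting, where the only care needed is the $\rho^{-\kappa}$ weight carried by the spherical part of the cometric; I note that this weight, and hence the exponent $1+\kappa$ appearing in (2), is independently pinned down by the invariance of both sides under the rescaling $\gamma\mapsto\gamma_\lambda$ of Lemma \ref{lema:calibrating of homogeneous}, which provides a useful consistency check.
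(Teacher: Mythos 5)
Your strategy coincides with the paper's: establish that $d_{\gamma(t)}u=\inner{\,\cdot\,}{\dot\gamma(t)}$ at interior points of the calibrating curve and then split this identity into radial and spherical parts of the polar decomposition $\gamma=\rho\,\sigma$. Your Cauchy--Schwarz derivation of $\nabla u(\gamma)=\dot\gamma$ from the two energy relations is a legitimate self-contained substitute for the paper's direct appeal to the fact that at an interior point of a calibrating curve the solution is differentiable and its differential is the Legendre transform of the velocity (which, for this mechanical Lagrangian, is exactly $\inner{\,\cdot\,}{\dot\gamma}$); the treatment of differentiability of $v$ via homogeneity is the same, and item (1) comes out identically in both arguments.

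The one substantive issue is that what you call ``identity (2)'' is not the identity (2) of the statement: you obtain $d_{\sigma(t)}v(\nu)=\inner{\nu}{\rho(t)^{1+\kappa}\dot\sigma(t)}$, whereas the theorem asserts the exponent $\kappa$. The discrepancy is a factor of $\rho$ and it traces back to the coefficient of the spherical part of $d_{\gamma(t)}u$: a vector $\nu\in T_{\sigma(t)}S$ attached at the point $\rho\sigma$ corresponds to the spherical velocity $\nu/\rho$, so $d_{\rho\sigma}u(\nu)=\rho^{1-\kappa}\,d_{\sigma}v(\nu/\rho)=\rho^{-\kappa}\,d_{\sigma}v(\nu)$, which is what you wrote; the paper's proof instead uses $\rho^{1-\kappa}d_{\sigma}v(\nu)$ for this term and therefore lands on the exponent $\kappa$. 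Your version is the one consistent with homogeneity ($d_{\sigma}u=\rho^{\kappa}d_{\rho\sigma}u$ by Euler), with the zero-energy identity $\dot\rho^{2}+\rho^{2}\norm{\dot\sigma}^{2}=2\rho^{-2\kappa}U(\sigma)$ combined with \eqref{HJH}, and with the invariance under the rescaling $\gamma\mapsto\gamma_\lambda$ of lemma \ref{lema:calibrating of homogeneous} that you invoke at the end: under that rescaling $d_{\sigma}v$ is unchanged while $\rho^{e}\dot\sigma$ is unchanged only for $e=1+\kappa$. So your computation is correct, but as written your proof claims agreement with a formula it does not establish; you should state explicitly that you are proving a corrected form of (2). (The downstream uses that depend only on the sign or the vanishing of the coefficient, such as corollary \ref{v-growsalongcalibrating} and part (a) of theorem \ref{teo: foliation in S}, survive the correction, but any quantitative use of the exponent would need to be revisited.)
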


\noindent
\textbf{Remark.} The condition $\gamma(t)\in \Omega$ is needless when Marchal's
theorem applies (for instance in the Newtonian case) because calibrating curves
are always minimizers and must avoid collisions.

\begin{proof}
Suppose now that $\gamma:(a,b)\to \Omega$ calibrates
the homogeneous function $u$. At each $t\in (a,b)$ we have that
$U(\gamma(t))<+\infty$. Thus $\gamma$ is an extremal without collisions
of the Lagrangian action, hence $\gamma$ is smooth.
Since $t\in (a,b)$ is an interior point, $u$ is differentiable at
$\gamma(t)$ and the calibrating condition implies that
$d_{\gamma(t)}u$ is the Legendre transform of $\dot\gamma(t)$
(see \cite{Fathibook}).
In other words, using the mass inner product we have
\[d_{\gamma(t)}u(\xi)=\inner{\xi}{\dot\gamma(t)}\]
for any $\xi\in V$. By homogeneity, $u$ is differentiable at
$\lambda\gamma(t)$ for all $t\in(a,b)$ and any $\lambda>0$.
In polar coordinates we can write
$u(\lambda\,s)=\lambda^{1-\kappa}v(s)$,
thus $v$ is differentiable at
$\sigma(t)$ for all $t\in(a,b)$.
Also using polar coordinates we can write
$\gamma(t)=\rho(t)\sigma(t)$
where $\rho(t)=I(\gamma(t))^{1/2}$ and $\sigma(t)=\pi(\gamma(t))$.
At each time $t\in(a,b)$ a vector $\xi\in V$
can be written as $\xi=r\,\sigma(t)+\nu$ with
$\inner{\nu}{\sigma(t)}=0$. Thus we have
\begin{eqnarray*}
d_{\gamma(t)} u(\xi)&=&
\inner{r\sigma(t)+\nu}{\dot\rho(t)\,\sigma(t)+\rho(t)\,\dot\sigma(t)}\\
&=&r\,\dot\rho(t)+\rho(t)\inner{\nu}{\dot\sigma(t)}
\end{eqnarray*}
and also
\[d_{\gamma(t)}u(\xi)=(1-\kappa)\rho(t)^{-\kappa}v(\sigma(t))\,r+
\rho(t)^{1-\kappa}d_{\sigma(t)}v(\nu)\,.\]
Since $\xi\in V$ is arbitrary,  we have that
\[\dot\rho=(1-\kappa)\rho^{-\kappa}v(\sigma)\]
and that for all $t\in(a,b)$
\[d_{\sigma(t)}v(\nu)=\inner{\nu}{\rho(t)^{\kappa}\dot\sigma(t)}
\;\;\textrm{ for all }\nu\in T_{\sigma(t)}S\]
\end{proof}

\begin{corollary}\label{v-growsalongcalibrating}
Let $u\in\calH$ be an homogeneous weak KAM solution of the $N$-body problem,
and $v$ the restriction of $u$ to the unit sphere $S$. If
$\gamma:(a,b)\to \Omega$ is a calibrating curve for $u$, then
$v(\pi(\gamma))$ is strictly increasing unless $\gamma$ is homothetic.
\end{corollary}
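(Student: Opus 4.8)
The plan is to differentiate $t\mapsto v(\sigma(t))$ and read off the sign of its derivative from the two identities of Theorem \ref{rho-sigma-eq}. First I would record that $\dot\sigma(t)$ is tangent to the sphere: since $\sigma(t)\in S$ means $I(\sigma(t))=\norm{\sigma(t)}^2\equiv 1$, differentiating yields $\inner{\sigma(t)}{\dot\sigma(t)}=0$, so $\dot\sigma(t)\in T_{\sigma(t)}S$ for every $t$. As established in the proof of Theorem \ref{rho-sigma-eq}, $\gamma$ is a smooth extremal and $v$ is differentiable along $\sigma$, so the chain rule applies and I may evaluate identity (2) of that theorem at $\nu=\dot\sigma(t)$.

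This gives the key computation
\[\frac{d}{dt}\,v(\sigma(t))=d_{\sigma(t)}v(\dot\sigma(t))
=\inner{\dot\sigma(t)}{\rho(t)^\kappa\,\dot\sigma(t)}
=\rho(t)^\kappa\,\norm{\dot\sigma(t)}^2\,.\]
Because $\gamma(t)\in\Omega$ has no collisions we have $\rho(t)>0$, so the right-hand side is non-negative and vanishes exactly at those $t$ with $\dot\sigma(t)=0$. Hence $v\circ\sigma$ is non-decreasing on $(a,b)$, which already settles the monotone part.

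To upgrade this to strict monotonicity in the non-homothetic case, I would argue through flat intervals. A non-decreasing function fails to be strictly increasing only if it is constant on some nontrivial subinterval $[t_1,t_2]\subset(a,b)$; since the integrand $\rho^\kappa\norm{\dot\sigma}^2$ is continuous and non-negative, this forces $\dot\sigma\equiv 0$ on $[t_1,t_2]$. Then $\gamma(t)=\rho(t)\,\sigma(t_1)$ stays on a single ray there, so its acceleration is proportional to $\sigma(t_1)$; comparing with the Euler--Lagrange equation $\ddot\gamma=\nabla U(\gamma)$ and using that $\nabla U$ is homogeneous shows $\sigma(t_1)$ is a central configuration and that $\gamma$ is homothetic on $[t_1,t_2]$. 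By uniqueness of the extremals of the action (the defining vector field is smooth on $\Omega$), $\gamma$ is then homothetic on all of $(a,b)$, contrary to assumption.

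The easy part is the derivative formula, which is a one-line substitution into Theorem \ref{rho-sigma-eq} and only yields that $v\circ\sigma$ is non-decreasing. The hard part, and the real content of the corollary, is ruling out flat intervals: one must recognize a flat piece of $v\circ\sigma$ as a homothetic piece of $\gamma$, and then propagate the homothetic character to the entire curve via uniqueness of solutions of Newton's equations away from collisions.
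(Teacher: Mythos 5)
Your proposal is correct and follows essentially the same route as the paper: differentiate $v\circ\sigma$ via identity (2) of Theorem \ref{rho-sigma-eq} to get $\rho^\kappa\norm{\dot\sigma}^2\geq 0$, then show a flat interval forces $\dot\sigma\equiv 0$ there, hence a homothetic piece, which propagates to the whole curve. The paper dismisses the final propagation step as ``clear''; your appeal to uniqueness of solutions of Newton's equations on $\Omega$ is exactly the intended justification.
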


\begin{proof}
If $\gamma=\rho\,\sigma$ as before, then theorem \ref{rho-sigma-eq} implies
\[\frac{d}{dt}v(\sigma(t))=
\inner{\dot\sigma(t)}{\rho(t)^{\kappa}\dot\sigma(t)}=
\rho(t)^\kappa\,\norm{\dot\sigma(t)}^2\geq 0\,.\]
Therefore $v(\sigma)$ is not decreasing. On the other hand, if
we have $v(\sigma(c))=v(\sigma(d))$ for some values of $c<d$
then
\[0=\int_c^d\,\rho(t)^\kappa\,\norm{\dot\sigma(t)}^2\,dt\]
which implies $\dot\sigma(t)=0$ for all $t\in[c,d]$ because $\rho>0$.
We deduce that $\gamma$ is homothetic on the interval $[c,d]$. It is
clear that this implies that $\gamma$ is homothetic over
whole his domain (and that $\sigma$ is a central configuration).
\end{proof}

\section{Minimizing configurations}

As we have say, a minimizing configuration is a central configuration
such that his parabolic ejection is a free time minimizer.
The parabolic ejection of a central configuration $s$ is a curve
$\gamma_s:[0,+\infty)\to V$ of the form
\begin{equation}\label{parabolicray}
\gamma_s(t)=\alpha_s\,t^{c_\kappa}\,s
\end{equation}
where $\alpha_s$ and $c_\kappa$ are positive constants which depends
on the subscripts. We need to compute explicitly these constants.
For the sake of simplicity the configuration $s$ will be supposed of
unit norm, that is, $s\in S$.
In order to be a motion the curve $\gamma_s$, it must satisfies the
Newton's equation of motion $\ddot\gamma_s=\nabla U(\gamma_s)$
(we recall that the gradient is taken with respect to the mass inner product).
The explicit computation gives
\[\ddot\gamma_s(t)=c_\kappa(c_\kappa-1)\alpha_s\,t^{c_\kappa-2}\,s\]
and
\[\nabla U(\gamma_s(t))=
\alpha_s^{-(2\kappa+1)}\,t^{-c_\kappa(2\kappa+1)}\,\nabla U(s)\,.\]
Therefore, the equation of motion will be satisfied if
and only if\vspace{2mm}

\begin{itemize}
  \item[(a)] $\nabla U(s)=\lambda\,s$ for some constant $\lambda$,\vspace{2mm}
  \item[(b)] $c_\kappa-2=-c_\kappa(2\kappa+1)$, and\vspace{2mm}
  \item[(c)] $\lambda\,\alpha_s^{-(2\kappa+1)}=
             c_\kappa(c_\kappa-1)\alpha_s$.\vspace{2mm}
\end{itemize}

The first condition says that $s$ is a central configuration, or
in other words, that $s$ is a critical point of the restriction of
$U$ to the sphere $S$. We will see that if condition (a) is satisfied,
then the equations (b) and (c) have unique solution, namely, the ones
that make the curve $\gamma_s$ a zero energy motion.

Since $U$ is homogeneous of degree $-2\kappa$, the Euler's theorem
gives
\[\inner{\nabla U(s)}{s}=-2\kappa\,U(s)\,.\]
Thus, if (a) is satisfied, we must have
$-2\kappa\,U(s)=\inner{\lambda\,s}{s}=\lambda$.
From (b) we deduce that
\[c_\kappa=\frac{1}{1+\kappa}\]
which takes the well known value $2/3$ in the Newtonian case.
Finally, replacing the found values of $\lambda$ and $c_\kappa$
in equation (c) we get that
\[\alpha_s=\left(2(1+\kappa)^2\,U(s)\right)^{1/2(1+\kappa)}\]
which takes the well known value $(9\,U(s)/2)^{1/3}$ in the Newtonian case.
If now we compute the kinetic energy and the potential function
\[T(t)=\frac{1}{2}\norm{\dot\gamma_s(t)}^2
\;\;\textrm{ and }\;\;U(t)=U(\gamma_s(t))\]
we obtain
\begin{equation}\label{kinetikofparabolicray}
T(t)=U(t)=\;2^{-\kappa/(1+\kappa)}\;(1+\kappa)^{-2\kappa/(1+\kappa)}\;
U(s)^{1/(1+\kappa)}\;t^{-2\kappa/(1+\kappa)}
\end{equation}

which shows that $\gamma_s$ is a zero energy motion.

Because of lemma \ref{homogeneityPHIxyt}, we can deduce
that the critical potential action $\phi(x,y)$ is homogeneous
of degree $1-\kappa$, that is, we have
\begin{equation}\label{phihomogeneo}
\phi(\lambda\,x,\lambda\,y)=\lambda^{1-\kappa}\,\phi(x,y)
\end{equation}
for any pair of configurations $x,y\in V$ and any value of
$\lambda>0$.
Since the curve $\gamma_s$ is invariant under the blow-up
transformations used in lemma \ref{homogeneityPHIxyt} or in
lemma \ref{lema:calibrating of homogeneous}, it is easy to  see
that if the equality
\[A(\gamma_s\mid_{[0,t]})=\phi(0,\gamma_s(t))\]
holds for some $t_0>0$, then it must also hold for every $t>0$.
In particular, since the restriction of a free time minimizer
to a subinterval is also a free time minimizer, if the previous
equality holds for some $t_0>0$ then we will have
\[A(\gamma_s\mid_{[a,b]})=\phi(\gamma_s(a),\gamma_s(b))\]
for every compact interval $[a,b]\subset [0,+\infty)$.
This is precisely the condition required to be the parabolic
ejection $\gamma_s$ a free time minimizer.

We now introduce an auxiliary function $\psi:S\to\R$.
Given normal configuration $s\in S$, let $t(s)>0$
be the time in which the curve $\gamma_s$ above defined
\eqref{parabolicray} pass
through the configuration $s$, that is such that $\gamma_s(t(s))=s$, and set
\begin{equation}\label{funcionaux}
\psi(s)=A(\gamma_s\mid_{[0,t(s)]})\,.
\end{equation}
Of course, we have $\psi(s)\geq \phi(s,0)$ for all $s\in S$.
The above discussion shows that if $\psi(s)=\phi(x,0)$ then the
parabolic ejection $\gamma_s$ is a free time minimizer.
Therefore we have proved the following proposition,
which gives a characterization of the set of normal
minimizing configurations.

\begin{proposition}\label{prop: min iff psi=phi0}
A normal configuration $s\in S$ is minimizing configuration if
and only if satisfies $\psi(s)=\phi(s,0)$.
\end{proposition}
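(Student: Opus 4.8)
The plan is to prove both implications by converting the defining property of a minimizing configuration (that the parabolic ejection $\gamma_s$ is a free time minimizer) into the single scalar equality $\psi(s)=\phi(s,0)$, using the homogeneity of the action recorded in \eqref{phihomogeneo}. First I would record the trivial inequality that is already noted, namely $\psi(s)=A(\gamma_s\mid_{[0,t(s)]})\geq\phi(0,\gamma_s(t(s)))=\phi(0,s)=\phi(s,0)$, which holds because $\gamma_s\mid_{[0,t(s)]}$ is one particular curve joining $0$ to $s$ and because the Lagrangian $\tfrac12\dot\gamma^2+U(\gamma)$ is reversible, so that $\phi(0,s)=\phi(s,0)$. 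This makes clear that the whole content of the proposition is the characterization of when this inequality is an equality.

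For the forward implication, suppose $s$ is a minimizing configuration, so by definition $\gamma_s$ is a free time minimizer. Applying the free time minimizer property directly to the compact subinterval $[0,t(s)]$ gives $A(\gamma_s\mid_{[0,t(s)]})=\phi(\gamma_s(0),\gamma_s(t(s)))=\phi(0,s)$. Since the left-hand side is exactly $\psi(s)$ and $\phi(0,s)=\phi(s,0)$, this yields $\psi(s)=\phi(s,0)$, as wanted. This direction is pure definition-chasing once reversibility is in hand.

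The converse is where the real work lies. Suppose $\psi(s)=\phi(s,0)$; this says precisely that the calibration equality $A(\gamma_s\mid_{[0,t]})=\phi(0,\gamma_s(t))$ holds at the single time $t_0=t(s)$. The key step is to propagate it to all $t>0$: because $\gamma_s(t)=\alpha_s\,t^{c_\kappa}s$ is invariant (up to the blow-up reparametrization of Lemma \ref{lema:calibrating of homogeneous}) under the scaling $\gamma\mapsto\gamma_\lambda$, and because $\phi$ is homogeneous of degree $1-\kappa$ by \eqref{phihomogeneo}, both $A(\gamma_s\mid_{[0,t]})$ and $\phi(0,\gamma_s(t))$ scale by the same factor $\lambda^{1-\kappa}$ under the time change $t\mapsto\lambda^{1+\kappa}t$; hence equality at $t_0$ forces equality for every $t>0$. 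To upgrade this to the free time minimizer property on an arbitrary subinterval $[a,b]\subset(0,+\infty)$, I would invoke the triangle inequality for $\phi$: additivity of the action gives $A(\gamma_s\mid_{[a,b]})=\phi(0,\gamma_s(b))-\phi(0,\gamma_s(a))$, while $\phi(0,\gamma_s(b))\leq\phi(0,\gamma_s(a))+\phi(\gamma_s(a),\gamma_s(b))$ gives $A(\gamma_s\mid_{[a,b]})\leq\phi(\gamma_s(a),\gamma_s(b))$; the reverse inequality is automatic, so $\gamma_s$ calibrates on every subinterval and is a free time minimizer, i.e. $s\in\calM$.

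I expect the propagation step in the converse to be the main obstacle, and its substantive input is exactly the homogeneity of $\phi$ combined with the scale invariance of the parabolic ejection — everything else is definitions and the triangle inequality. The one technical point I would verify carefully is that the collision endpoint at $t=0$ causes no trouble: the action $A(\gamma_s\mid_{[0,t]})$ is finite by the explicit formula \eqref{kinetikofparabolicray}, and all inequalities above compare only finite quantities, so including $t=0$ is harmless.
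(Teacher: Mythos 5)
Your proof is correct and follows essentially the same route as the paper, which proves the proposition through the discussion immediately preceding its statement: the trivial inequality $\psi(s)\geq\phi(s,0)$, the propagation of the calibration equality from the single time $t(s)$ to all $t>0$ via the scale invariance of $\gamma_s$ under the blow-up transformation together with the homogeneity \eqref{phihomogeneo} of $\phi$, and then the passage to arbitrary compact subintervals. Your explicit triangle-inequality argument for that last passage is a welcome clarification of the paper's rather terse appeal to the fact that restrictions of free time minimizers are free time minimizers, and your remarks on the reversibility giving $\phi(0,s)=\phi(s,0)$ and on the finiteness of the action at the collision endpoint $t=0$ are both accurate.
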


\begin{corollary}\label{minimizing is compact}
The set of normal minimizing configurations $\calM\subset S$ is compact.
\end{corollary}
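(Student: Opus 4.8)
The plan is to prove compactness of $\calM$ by showing that it is a closed subset of the compact sphere $S$; since $S$ is the unit sphere of a finite-dimensional Euclidean space, closedness is all that is needed. By Proposition \ref{prop: min iff psi=phi0} we have the characterization
\[
\calM=\set{s\in S\mid \psi(s)=\phi(s,0)}\,,
\]
so it suffices to show that the function $s\mapsto \psi(s)-\phi(s,0)$, which is everywhere nonnegative on $S$, has closed zero set. The cleanest route is to establish that both $\psi$ and $s\mapsto\phi(s,0)$ are continuous on $S$; then their difference is continuous and $\calM=(\psi-\phi(\cdot,0))^{-1}(\{0\})$ is closed, hence compact.

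First I would treat $s\mapsto\phi(s,0)$. Continuity of $\phi$ as a function of its endpoints is part of the standard weak KAM framework recalled in Section 2 (it underlies the continuity of the Lax--Oleinik semigroup and the H\"older estimate \eqref{holder}), so restricting to the compact sphere and to the fixed endpoint $0$ gives a continuous function on $S$. The more delicate term is $\psi$. Here I would use the explicit description of the parabolic ejection: from \eqref{kinetikofparabolicray} the Lagrangian action of $\gamma_s$ restricted to $[0,t(s)]$ can be integrated in closed form, producing an expression for $\psi(s)$ that depends on $s$ only through $U(s)$. Concretely, one integrates $T(t)+U(t)=2\,U(t)$ against $dt$ from $0$ to $t(s)$ (the time at which $\gamma_s$ reaches the unit sphere, determined by $\alpha_s\,t(s)^{c_\kappa}=1$), and both $\alpha_s$ and the resulting value are explicit positive powers of $U(s)$. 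Since $U$ is continuous and strictly positive on the collision-free part of $S$ and equals $+\infty$ on collision configurations, $\psi$ inherits lower semicontinuity, and in fact continuity wherever $U(s)<\infty$.

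The main obstacle I anticipate is the behaviour near collision configurations, where $U(s)=+\infty$ and $\psi(s)$ likewise blows up. A point $s$ with a collision cannot be a central configuration at all, so it is automatically excluded from $\calM$; the real worry is whether a sequence $s_n\in\calM$ could converge to such a collision configuration $s_\infty$. To rule this out I would argue that along such a sequence $\psi(s_n)\to+\infty$ (since $U(s_n)\to+\infty$ forces $\alpha_{s_n}\to+\infty$ and the action to diverge), whereas $\phi(s_n,0)$ stays bounded by the H\"older estimate \eqref{holder}, giving $\psi(s_n)-\phi(s_n,0)\to+\infty$; this contradicts $s_n\in\calM$, where the difference is $0$. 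Hence no limit point of $\calM$ is a collision, every limit point lies in the region where $\psi-\phi(\cdot,0)$ is continuous, and the defining equation $\psi=\phi(\cdot,0)$ passes to the limit. Therefore $\calM$ is closed in $S$ and, being a subset of the compact sphere, compact.
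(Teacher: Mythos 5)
Your proof is correct and follows exactly the route the paper intends: the corollary is stated as an immediate consequence of Proposition \ref{prop: min iff psi=phi0}, with $\calM$ realized as the closed zero set of the nonnegative function $\psi-\phi(\cdot,0)$ on the compact sphere $S$, using the explicit formula $\psi(s)=\tfrac{1}{1-\kappa}(2U(s))^{1/2}$ and the H\"older bound \eqref{holder} for $\phi(\cdot,0)$. Your extra care near the collision set $K$ (where $\psi=+\infty$ while $\phi(\cdot,0)$ stays bounded, so no sequence in $\calM$ can accumulate there) is exactly the point the paper leaves implicit, and it is handled correctly.
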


Let us compute the auxiliary function $\psi$. First we need to
compute the time $t(s)$ of a given configuration $s\in S$.
Clearly we have $\gamma_s(t(s))=s$ if and only if
$\alpha_s\,t(s)^{c_\kappa}=1$, thus we deduce that
\[t(s)=\alpha_s^{-(1+\kappa)}=
\left(2\,(1+\kappa)^2\,U(s)\right)^{-1/2}\,.\]
Since $T(t)=U(t)$ for any $s\in S$,
even is the configuration $s$ is not minimizing, we can write
\[\psi(s)=\int_0^{t(s)}T(t)+U(t)\,dt=2\,\int_0^{t(s)}U(t)\,dt\]
which gives
\[\psi(s)=\frac{1}{1-\kappa}\,(2\,U(s))^{1/2}\]
and in the Newtonian case takes the value $2\sqrt{2U(s)}$.
It is not surprising to note that the explicit computation of
the auxiliary function $\psi$ gives (modulo a square root) one
of the terms of the Hamilton-Jacobi equation \eqref{HJH}.
Using the function $\psi$ we can reformulate
equation \eqref{HJH} in a more suggestive way as
\begin{equation}\label{HJH2}
v(s)^2+\frac{1}{(1-\kappa)^2}\norm{d_sv}^2=\psi(s)^2
 \end{equation}

Another important role of the minimizing configurations is that
they allow to define the critical Busemann functions, since
their associated parabolic ejections are minimizing geodesics
(geodesic rays) for the Jacobi metric on the zero energy level.
More precisely, if $s\in S$ is a minimizing configuration,
the corresponding Busemann critical function is defined by
\[b_s(x)=
\lim_{t\to+\infty}\left(\phi(x,t\,s)-\phi(t\,s,0)\right)\,.\]
These functions are homogeneous weak KAM solutions and are studied
by Boris Percino and Héctor Sánchez-Morgado
in \cite{BorisHector}, where it is proved that the above limit defines a
weak KAM solution and that his calibrating curves are asymptotic to
the minimizing configuration.

\section{Smooth homogeneous solutions.}\label{sec:smoothHsol}

This section is devoted to the Newtonian case $2\kappa=1$ in
a space of dimension at least two. The reason of this restriction
is that we want to apply several results which are until now
only proved for the Newtonian potential
like \cite{AlbouyKaloshin,daLuzMad,Shub}
or which are not true in the collinear case, like
\cite{ChencinerICM,FerrTerr,Marchal}.

The main application of the analysis developed
in the previous sections is the following theorem.
Recall that the unit sphere $S\subset V$ is a Riemannian manifold
as a submanifold of $V$ endowed with the mass inner product.
We will denote $K\subset S$ the compact set of normal configurations
with partial collisions.

\begin{theorem}\label{teo: foliation in S}
Let $U$ be the Newtonian potential ($\kappa=1/2$) and suppose that
$\dim E\geq 2$.
Let $u:V\to\R$ be an homogeneous smooth solution of Hamilton-Jacobi equation
\eqref{HJ} (in the sense of \ref{def:smoothsol})
and $v$ the restriction of $u$ to the unit sphere
$S$. Let $\nabla v$ be his gradient vector field, which is a smooth
vector field on $S\setminus K$.
For $s\in S\setminus K$, let $\theta_s:(a_s,b_s)\to S$ be the maximal
solution of $\dot\theta=\nabla v(\theta)$ with $\theta_s(0)=s$.
Let $Z_v=\set{s\in S\mid \nabla v(s)=0}$.
Then we have:

\begin{enumerate}
  \item[(a)] $Z_v$ is a subset of $\calM$,
             the set of minimizing configurations.
             \vspace{2mm}
  \item[(b)] If $s\in A=S\setminus(K\cup Z_v)$ then\vspace{2mm}
      \begin{enumerate}
        \item[(i)] $a_s=-\infty$ and the $\alpha$-limit set
                   satisfies $\alpha(s)\subset Z_v$.\vspace{2mm}
        \item[(ii)] $b_s<+\infty$ and there is $r(s)\in K$ such that \[\lim_{t\to b_s}\theta(s)=r(s)\,.\]
        \item[(iii)] The map
                     $r:A\to K$ continuous
                     and surjective.
      \end{enumerate}
\end{enumerate}
\end{theorem}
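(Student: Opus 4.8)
The plan is to exploit the dictionary established in Theorem \ref{rho-sigma-eq} and Corollary \ref{v-growsalongcalibrating}, which relate calibrating curves of the homogeneous weak KAM solution $u$ on the cone $V$ to trajectories of the gradient field $\nabla v$ on the sphere $S$. The key geometric idea is that a smooth homogeneous solution $u$ has, at every non-collision configuration, a \emph{unique} calibrating curve (differentiability of a weak KAM solution is equivalent to uniqueness of the calibrating curve), and that these calibrating curves projected to $S$ are exactly the integral curves $\theta_s$ of $\nabla v$, reparametrized. Thus the dynamics of $\nabla v$ on $S\setminus K$ is a faithful shadow of the lamination by free-time minimizers, and the asymptotic behavior of these minimizers (parabolic ejection in the past, collision in finite internal time in the future) translates into statements (a) and (b).

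\medskip
\noindent\textbf{Proof of (a).}
First I would show $Z_v\subset\calM$. Let $s\in Z_v$, so $\nabla v(s)=0$. From part (2) of Theorem \ref{rho-sigma-eq}, along any calibrating curve through a non-collision point the angular velocity is recovered from $d_sv$; when $\nabla v(s)=0$ the unique calibrating curve through $s$ must have $\dot\sigma\equiv 0$, i.e. it is \emph{homothetic}, by the rigidity in Corollary \ref{v-growsalongcalibrating}. A homothetic calibrating curve is precisely a parabolic ejection that is a free time minimizer, so $s$ is a central configuration whose parabolic ejection calibrates $u$; hence $s\in\calM$ by the very definition of a minimizing configuration. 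The point here is that $\nabla v(s)=0$ forces the calibrating curve to be the pure scaling motion $\gamma_s$.

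\medskip
\noindent\textbf{Proof of (b), the asymptotics.}
For $s\in A=S\setminus(K\cup Z_v)$ I would reparametrize: by Theorem \ref{rho-sigma-eq}(2) the calibrating curve $\gamma=\rho\,\sigma$ satisfies $\dot\sigma=\rho^{-\kappa}\nabla v(\sigma)$, so the integral curve $\theta_s$ of $\nabla v$ is a time-change of $\sigma$. For (i), in backward time the calibrating curve of a homogeneous weak KAM solution is a complete free time minimizer defined on all of $(-\infty,0]$ and is \emph{completely parabolic} (this is the structural result cited for the Newtonian case in \cite{daLuzMad}), so $\rho\to 0$ and the configuration converges, after normalization, to a central configuration; since $v(\sigma)$ is monotone (Corollary \ref{v-growsalongcalibrating}) and bounded on the compact sphere, the $\omega$/$\alpha$-limit must consist of rest points of the reparametrized flow, i.e. $\alpha(s)\subset Z_v$, giving $a_s=-\infty$. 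For (ii), in forward time the minimizer must reach a collision in finite \emph{internal} Lagrangian time — here I would invoke that minimizers of the Newtonian action with $\dim E\ge 2$ avoiding interior collisions (Marchal, \cite{Marchal}) can only hit $K$ at the endpoint, and that the parabolic/self-similar scaling forces the blow-up time $b_s$ of the $\nabla v$-flow to be finite, with a limiting direction $r(s)\in K$ on the sphere. The surjectivity and continuity of $r$ in (iii) I would obtain from a degree/connectedness argument: continuity from smooth dependence of $\theta_s$ on initial conditions up to the blow-up time, and surjectivity by showing that every partial-collision direction $r\in K$ is the forward limit of some minimizer, using that $S\setminus K$ together with the ejection/collision lamination fibers over $K$.

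\medskip
\noindent\textbf{Main obstacle.}
I expect the hard part to be (b)(ii)–(iii): controlling the forward behavior near the collision set $K$. Proving that $b_s<+\infty$ and that $\theta_s$ actually \emph{converges} to a single point $r(s)\in K$ (rather than spiraling or oscillating) requires the fine asymptotics of Newtonian minimizers approaching collisions — this is exactly where the results valid only in the Newtonian, $\dim E\ge 2$ case (\cite{AlbouyKaloshin,Marchal,Shub}) are indispensable, and where the argument cannot be reduced to the soft weak-KAM formalism. Establishing surjectivity of $r$ onto all of $K$ is the other delicate point, as it amounts to realizing every collision direction by a globally minimizing trajectory.
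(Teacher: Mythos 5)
Your overall dictionary (calibrating curves $\leftrightarrow$ reparametrized orbits of $\nabla v$, parabolic past $\Rightarrow$ central configurations, forward singularity $\Rightarrow$ collision) is exactly the paper's strategy, but two essential steps are missing. The most serious one is in (b)(ii): you never exclude the possibility that the forward extension of the calibrating curve ends in a \emph{total} collision rather than a partial one. If $c_s$ were a total collision, $b_s$ would still be finite but the limit of $\theta_s$ would be a central configuration $s_0\notin K$, and the statement would fail. The paper rules this out by a blow-up argument: rescaling the calibrating curve by $\gamma_\lambda(t)=\lambda\gamma(\lambda^{-3/2}t)$ and using lower semicontinuity of the action, one shows that any calibrating curve of a homogeneous weak KAM solution with a total collision must be homothetic, so the limiting central configuration $s_0$ would satisfy $\nabla v(s_0)=0$, contradicting unique integrability of $\nabla v$ along the regular orbit through $s$. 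You also need to justify that the forward maximal extension exists, calibrates $u$, and has \emph{finite} escape time (this uses that there are no complete free time minimizers, \cite{daLuzMad}), and to exclude pseudocollisions for $N\geq 4$ (via the bound $\frac{d}{dt}\rho^{3/2}\leq M$ coming from Theorem \ref{rho-sigma-eq}(1)). Relatedly, in (b)(i) a completely parabolic motion has $\rho\to+\infty$ as $t\to-\infty$, not $\rho\to 0$ as you wrote; and in (a) your homothety argument identifies the backward calibrating branch with the parabolic \emph{collision}, so concluding that the \emph{ejection} from the origin is a free time minimizer still requires the chain $\abs{v(s)}\leq\phi(s,0)\leq\psi(s)$ forced to equality — which is in fact the paper's entire proof of (a), read off directly from equation \eqref{HJH2} at a critical point, with no dynamics needed. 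Your appeal to the ``rigidity'' of Corollary \ref{v-growsalongcalibrating} does not apply from the single datum $\dot\sigma(0)=0$; what you need is unique integrability of $\nabla v$.

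The second gap is the continuity of $r$ in (b)(iii). Smooth dependence on initial conditions only gives uniform convergence of $\theta_{s_n}$ to $\theta_s$ on compact subintervals of $(a_s,b_s)$; it says nothing about the behaviour of the escape times $b_{s_n}$ or of the limits $r(s_n)$, because the vector field degenerates at $K$ and the blow-up time is not a priori continuous. The paper's proof of $T_n\to T_s$ and $c_n\to c_s$ is genuinely delicate: it uses Tonelli's theorem to extract a limiting minimizer from $\gamma_{n_k}\mid_{[0,T^*]}$, Marchal's theorem to forbid that limit from having an interior collision, and the elementary lower bound $\phi(x,y,T)\geq\frac{1}{2}\norm{x-y}^2T^{-1}$ to prevent the endpoints $c_n$ from jumping away from $c_s$ in vanishing time. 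None of this is replaceable by the ODE continuity you invoke, so as written (b)(iii) is asserted rather than proved. (Your surjectivity sketch can be replaced by the paper's one-line argument: for $s_0\in K$, any normalized point of the backward calibrating curve ending at $s_0$ lies in $A$ and is mapped to $s_0$.)
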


\begin{proof}
Let $s\in Z_v$ and
Since $v$ is the restriction of $u$ to the
unit sphere $S$, then it is clear that we have
\[\abs{v(s)}=\abs{u(s)-u(0)}\leq\phi(s,0)\leq\psi(s)\,.\]
Therefore, if $v$ has a critical point at some configuration
$s\in S$, then equation \eqref{HJH2} implies that
$\abs{v(s)}=\psi(s)$. Thus we must have $\psi(s)=\phi(s,0)$,
which implies that $s$ is a minimizing configuration as a
consequence of proposition \ref{prop: min iff psi=phi0}.
Thus we have proved item (a).

Suppose now that $s\in S\setminus K$ and that $\nabla v(s)\neq 0$.
Let $\gamma:(-\infty,0]$ be the unique calibrating curve for $u$
such that $\gamma(0)=s$ (the uniqueness is ensured by the
differentiability of $u$ at $s$). We know that $\gamma$ is a free
time minimizer, and it is proved in \cite{daLuzMad} that
$\gamma(t)$ is therefore completely parabolic for $t\to -\infty$.
It is known that the normalized configuration of such motions
tends to the set of central configurations. A simple proof
of this fact for homogeneous potential was written by Alain Chenciner
(see ``Théorème fondamental'' in \cite{Chenciner}). On the other
hand, a well known theorem
of Shub \cite{Shub} says that the set of normal central configurations
is a compact subset of $S\setminus K$. Of course this is obvious when
the conjecture of finiteness of the set of similarity classes of
central configurations holds,
like in the three body problem, or in many other cases
(see for instance the work of Albouy and Kaloshin
\cite{AlbouyKaloshin} and the references therein).
We have thus that $\sigma(t)=\pi(\gamma(t))$ tends to the set of
central configurations when $t\to -\infty$.

By theorem \ref{rho-sigma-eq} (2) we have, for each $t\leq 0$,
\[\dot\sigma(t)=\rho(t)^{-1/2}\nabla v(\sigma(t))\]
where $\rho(t)=I(\gamma(t))^{1/2}$. Thus $\sigma$ is a reparametrization
of a segment of $\theta$. If $\sigma(t)=\theta(\tau(t))$ then we
have that $\tau(0)=0$ and that
\[\dot\sigma(t)=\dot\tau(t)\,\dot\theta(\tau(t))=
\dot\tau(t)\,\nabla v(\sigma(t))\]
for all $t\leq 0$. Therefore $\tau$ satisfies
$\dot\tau(t)=\rho(t)^{-1/2}$. By integration we get
\[\tau(t)=-\int_t^0\rho(u)^{-1/2}du\,.\]
In a completely parabolic motion all mutual distances grow like $t^{2/3}$,
thus we have $\rho(u)\sim \abs{u}^{2/3}$ for $u\to -\infty$.
Therefore $\tau(t)\sim -\abs{t}^{2/3}$ for $t\to -\infty$, which proves
that $a_s=-\infty$ and that $\theta(t)$ tends to the compact set
of central configurations for $t\to -\infty$. In particular,
the $\alpha$-limit set $\alpha(s)$ is a well defined compact
connected set of central configurations. Since each point in
$\alpha(s)$ is recurrent, and regular orbits of a gradient flow
are never recurrent, we conclude that each point in $\alpha(s)$
is an equilibrium point, meaning that $\alpha(s)\subset Z_v$.
The statement b.(i) is thus proved.

We will prove now statements b.(ii) and b.(iii). As before, let
$\gamma:(-\infty,0]$ be the unique calibrating curve for $u$
such that $\gamma(0)=s$. By lemma \ref{lema: calibrating smooth newton}
bellow, $\gamma$ can be extended to a maximal motion over an interval
$(0,T_s)$ with $T_s>0$ which also be called $\gamma$.
Moreover, this extension $\gamma$ also calibrates $u$.

Since the extended curve $\gamma$ is maximal,
We conclude that either $\gamma$ presents a pseudocollision
at time $t=T_s$ or there is a collision configuration
$c_s\in V$ such that
$\lim_{t\to T_s}\gamma(t)=c_s$. Recall that
Painlevé has proved that pseudocollisions can only occur
when the number of bodies is at least $N\geq 4$. If $\gamma$
has a collision at time $t=T_s$
there are two possibilities: either $c_s$ is a total collision,
either $c_s$ is a partial collision.

We discuss now these three cases.

\vspace{2mm}
\noindent
\emph{First case: $c_s$ is a total collision.}
  As before, we write $\gamma(t)=\rho(t)\sigma(t)$ for $t\leq T_s$
  where $\sigma=\pi(\gamma)$ and $\rho=I(\gamma)^{1/2}$.
  Writing $\sigma(t)=\theta(\tau(t))$ we have again that
  $\dot\tau(t)=\rho(t)^{-1/2}$ and that $\tau(0)=0$, but this time we know that
  \[\rho(t)\sim \left(T_s-t\right)^{2/3}\]
  for $t\to T_s$ because $\gamma$ presents a total collision at time $T_s$
  (see also \cite{Chenciner}).
  Thus by integration we get again, for $t\in[0,T_s)$
  \[\tau(t)=\int_0^t\rho(u)^{-1/2}du\,.\] If $\alpha>0$ is such that
  $\rho(u)\geq\alpha(T_s-u)^{2/3}$ we deduce the upper bound
  \[\tau(t)\leq \alpha\,\frac{3}{2}\,T_s^{2/3}\]
  for all $t\in (0,T_s)$, and that the limit $\tau_0$ of $\tau(t)$ for
  $t\to T_s$ exists. Now we use again the fact that $\gamma$ presents
  a total collision at time $T_s$ in order to guarantee that
  $\sigma(t)$ tends to the set of central configurations when
  $t\to T_s$. Since $\sigma(t)=\theta(\tau(t))$ and the set of central
  configurations is a compact subset of $S\setminus K$ we have that
  \[\lim_{t\to T_s}\theta(\tau(t))=\lim_{\tau\to\tau_0}\theta(\tau)=
  \theta(\tau_0)=s_0\] where $s_0$ is some central configuration.
  Thus we have proved that $\sigma(t)$ \emph{converges} to a central configuration
  $s_0$. Of course we have that $\nabla v(s_0)\neq 0$ since the
  vector field $\nabla v$ is uniquely integrable in $S\setminus K$.

  On the other hand, using a blow-up technique we can prove that
  the parabolic collision by $s_0$ calibrates $u$, which implies that
  $\nabla v(s_0)=0$. To see this, we start by translating the domain
  of the calibrating curve $\gamma$ in order to have the total
  collision at time $t=0$. Therefore we can suppose that
  $\gamma:(-\infty,0]$ is a calibrating curve of $u$,
  that $\gamma$ has total collision at $t=0$,
  and that $\gamma$ is completely parabolic for $t\to -\infty$.
  Moreover, we have that his normalized shape has a limit, that is
  to say that $\lim_{t\to 0} \sigma(t)=s_0$.
  Now we apply lemma \ref{lema:calibrating of homogeneous}
  to obtain a family of calibrating curves $(\gamma_\lambda)_{\lambda>0}$
  with exactly the same properties. Recall that $\gamma_\lambda$
  is defined for $t\leq 0$ by
  \[\gamma_\lambda(t)=\lambda\,\gamma(\lambda^{-3/2}t)\,.\]
  It is not difficult to prove with well known arguments that
  $\gamma_\lambda$ converges uniformly on compact subsets for
  $\lambda\to +\infty$ to an homothetic curve $\gamma_0$ by $s_0$.
  For instance, restricting the curves to $[-1,0]$, we have that
  \[A(\gamma_\lambda\mid_{[-1,0]})
  =u(\gamma_\lambda(-1))-u(0)=\phi(\gamma_\lambda(-1),0)\,.\]
  Since the Lagrangian action is lower semicontinuous,
  \[\lim_{\lambda\to +\infty}\gamma_\lambda(t)=\gamma_0(t)\]
  uniformly in $t\in [-1,0]$, and the potential action as well
  as the function $u$ are continuous,
  we also have that
  \[A(\gamma_0\mid_{[-1,0]})\leq
  \lim_{\lambda\to +\infty}A(\gamma_\lambda\mid_{[-1,0]})=
  u(\gamma_0(-1))-u(0)\,.\]
  Therefore the curve $\gamma_0$ is an homothetic calibrating
  curve of $u$, and as such it must be the parabolic collision
  by $s_0$. In particular we have $\nabla v(s_0)=0$, and the possibility
  of $c_s$ to be a total collision is excluded.

  \emph{Note that we have proved the following}: Any calibrating curve of an
  homogeneous weak KAM solution with a total collision is homothetic.

\vspace{2mm}
\noindent
\emph{ Second case: $c_s$ is a partial collision.}
  Once again we write $\gamma(t)=\rho(t)\sigma(t)$ for $t\leq T_s$
  where $\sigma=\pi(\gamma)$ and $\rho=I(\gamma)^{1/2}$.
  Writing $\sigma(t)=\theta(\tau(t))$ we have again that
  $\dot\tau(t)=\rho(t)^{-1/2}$, that $\tau(0)=0$ hence that
  \[\tau(t)=\int_0^t \rho(u)^{-1/2}du\]
  for all $t<T_s$. Since
  $\lim_{t\to T_s}\rho(t)=\rho_0=I(c_s)^{-1/2}>0$,
  we can say that $\tau(t)$ tends to the convergent integral
  \[\tau_0=\int_0^{T_s} \rho(u)^{-1/2}du\]
  when $t\to T_s$. Moreover, if $r(s)=\rho_0^{-1}c_s$
  is the normalized configuration of $c_s$, then $r(s)\in K$
  and we have that
  \[r(s)=\lim_{t\to T_s}\sigma(t)=\lim_{\tau\to\tau_0}\theta(\tau)\,.\]
  Note that this implies that the maximal solution $\theta$
  is defined until $\tau_0>0$ so we deduce that $b_s=\tau_0<+\infty$,
  and that in this second case statement b.(ii) holds.

\vspace{2mm}
\noindent
\emph{Third case: $\gamma$ has a pseudocollision at time $t=T_s$}
  We will exclude this possibility. In that case
  $\gamma(t)$ has no limit for $t\to T_s$. As before we write
  in polar coordinates $\gamma=\rho\,\sigma$. Our first step, will be to
  prove that $\rho(t)$ is bounded in $[0,T_s)$. By theorem \ref{rho-sigma-eq}
  we know that $2 \dot\rho(t)=\rho(t)^{-1/2}v(\sigma(t))$. Since
  $v$ is continuous in $S$, we deduce that
  there exists a positive constant $M>0$ for which
  \begin{equation}\label{eq: rho bounded unif}
  \frac{d}{dt}\,\rho(t)^{3/2}=
  \frac{2}{3}\,\rho(t)^{1/2}\dot\rho(t)\leq M
  \end{equation}
  for all $t\in [0,T_s)$. Thus $\rho(t)$ is bounded, and $\gamma(t)$ must
  be contained in a compact subset of $V$ for all $t\in [0,T_s)$.
  Since we are assuming that $\gamma(t)$ has no limit, we must have at least
  two limits points for $t\to T_s$. This is impossible because $\gamma$ is
  a free time minimizer.

\vspace{2mm}
It remains to prove statement b.(iii). Let $s_0\in K$ and $\gamma:(-\infty,0]$
be a calibrating curve of $u$ such that $\gamma(0)=s_0$. It is clear that
for every $t<0$ the configuration $s=\pi(\gamma(t))$ is in $A$ an that
$r(s)=s_0$. Therefore the map $r:A\to K$ is surjective.

In order to prove the continuity, note that it suffices to prove the
continuity of the map $s\mapsto c_s$ since $r(s)=\pi(c_s)$.
Let $(s(n))_{n>0}$ be a convergent sequence in $A$ such that
$\lim s(n)=s\in A$. For each $n>0$, we know that there is a configuration
with partial collisions $c_n$ and a positive time $T_n>0$ with
the following property: the maximal calibrating curve of $u$ passing by
$s_n$ at time zero is a curve $\gamma_n:(-\infty,T_n]\to V$ such that
$\gamma_n(T_n)=c_n$. Now observe that for each $n>0$ we have that
$\gamma_n$ is differentiable at $t=0$ and that the Legendre transform of
$\dot\gamma_n(0)$ is precisely $d_{s(n)}u$ which tends to $d_su$ since $u$ is smooth.
Thus we have that $\dot\gamma_n(0)\to w$, where $w\in V$ is the unique vector
such that his Legendre transform is precisely $d_su$. If $\gamma:(-\infty,T_s]$
is the maximal calibrating curve of $u$ passing by $s$ at time zero, we must
have for the same reason $\dot\gamma(0)=w$, hence we have proved that
$\lim\dot\gamma_n(0)=\dot\gamma(0)$.

\vspace{1mm}
\noindent
\emph{Claim 1 :} $\lim T_n=T_s$.
This is the more delicate part of the proof, and
uses both Tonelli's and Marchal's theorems
(the reader can found the proof of these fundamental theorems
for instance in \cite{Fathibook, Mat} for the first one, and
\cite{ChencinerICM, FerrTerr, Marchal} for the second).
Let $\epsilon>0$ and $0<t<T_s$. The continuity
of the Lagrangian flow on the phase space implies that, for sufficiently
large values of $n>0$ the curves $\gamma_n$ are defined at least
until time $t$, and that
$\norm{\gamma_n(t)-\gamma(t)}\leq\epsilon$. In particular we have
that $\liminf T_n\geq T_s$. Suppose that $\limsup T_n\geq T_s$.
If such is the case, it must exists $\delta>0$ and a subsequence
$(\gamma_{n_k})_{k>0}$ such that each curve $\gamma_{n_k}$ is defined
until time $T_{n_k}>T_s+\delta$. Let now $T^*\in(T_s, T_s+\delta)$.
We know that the sequence $\gamma_{n_k}(T^*)$ is bounded; in fact we
have bounded uniformly these values when we have exclude the case
of pseudocollisions, see the inequality \eqref{eq: rho bounded unif}
above.
Taking again a subsequence, we can assume that
there exists the limit
\[r=\lim_k\,\gamma_{n_k}(T^*)\,.\]
For each $k>0$, the curve $\gamma_n$ is a free time minimizer.
Thus in addition we have,
\[A(\gamma_{n_k}\mid_{[0,T^*]})=\phi(\gamma_{n_k}(T^*),s_n)\]
from which we deduce that
\[A(\gamma_{n_k}\mid_{[0,T^*]})\to \phi(r,s)\,.\]
Therefore Tonelli's theorem applies, and we deduce the existence
of of a subsequence of these curves which converges uniformly
to an absolutely continuous curve $\tilde\gamma$ defined on
$[0,T^*]$.
The lower semicontinuity of the Lagrangian action gives
\[A(\tilde\gamma)\leq \lim A(\gamma_{n_k}\mid_{[0,T^*]})
=\phi(r,s)\]
which says that $\tilde\gamma$ is also a free time minimizer.
The proof of the claim ends as follows: for every $T\in (0,T_s)$ we know that
$\gamma_{n_k}\mid_{[0,T]}$ also converges uniformly to $\gamma_0$.
Thus we have $\tilde\gamma(t)=\gamma(t)$ for every $t\in (0,T_s)$.
We conclude that $\tilde\gamma(T_s)=c_s$ is a configuration of partial
collisions, which contradicts Marchal's theorem since a minimizer
can not present a collision at any interior point of his domain.
Therefore we have proved the claim that $T_n\to T_s$,
but also

\vspace{1mm}
\noindent
\emph{Claim 2 :} Given $\epsilon>0$
and $t<T_s$ we have, for $n>0$ large enough, that $t<T_n$ and that
$\norm{\gamma_n(t)-\gamma(t)}\leq\epsilon$

To complete the proof of the theorem, we now prove that
\[\lim_n c_n=\lim_n\;\lim_{t\to T_n}\,\gamma_n(t)=
\lim_n\;\lim_{t\to T_s}\,\gamma(t)=c_s\,.\]

In what follows, we will assume that $c_n$ does not converges to $c_s$,
and arrive at a contradiction
As before, we use the fact that $\gamma_n(t)$ is uniformly bounded
as consequence of inequality \eqref{eq: rho bounded unif}.
Therefore, we can assume (taking a subsequence
if necessary) that $\lim c_n=c^*\neq c_s$.

Let us call $4\,d=\norm{c^*-c_s}>0$ and fix $\delta>0$.
Let $t<T_s$ be such that $\norm{\gamma(t)-c_s}\leq d$.
Choose $n_0>0$ such that $\norm{c_n-c^*}\leq d$ for every $n>n_0$.
Using now the \emph{claim 2} above with $\epsilon=d$,
we see that we can choose $n(t)>n_0$ such that
$\norm{\gamma_{n(t)}(t)-\gamma(t)}\leq d$. We can also assume
that $n(t)>n_0$ is large enough to have
\[\abs{T_{n(t)}-T_s}\leq\delta/2\,.\]
Therefore we have
\[\norm{c_{n(t)}-\gamma_{n(t)}(t)}\geq \norm{c^*-c_s}-3\,d=d\,\]
for all $t<T_s$ such that $\norm{\gamma(t)-c_s}\leq d$.
Moreover, each $\gamma_n$ is a free time minimizer and
$c_n=\gamma_n(T_n)$ for each $n>0$, therefore we also have
\[A(\gamma_{n(t)}\mid_{[t,T_{n(t)}]})=\phi(c_{n(t)},\gamma_{n(t)}(t))
=\phi(c_{n(t)},\gamma_{n(t)}(t),T_{(n(t)}-t)\,.\]
Let us write, to simplify the notation, $x_t=c_{n(t)}$,
$y_t=\gamma_{n(t)}(t)$ and $\tau_t=T_{n(t)}-t$.
Accordingly we can write $\norm{x_t-y_t}\geq d>0$ and
\[\phi(x_t,y_t,\tau_t)=\phi(x_t,y_t)\,.\]
Note that if $\abs{T_s-t}<\delta/2$ then we have
$\tau_t<\delta$. This is impossible for $\delta$
small enough as consequence of lemma
\ref{lema: lower bound phixyt} bellow.
This ends the proof of the theorem.
\end{proof}

\begin{lemma}\label{lema: calibrating smooth newton}
Suppose that $U$ is the Newtonian potential and that
$\dim E\geq 2$. Let $u:V\to\R$ be a smooth solution of Hamilton-Jacobi
equation \eqref{HJ} in the sense of \ref{def:smoothsol}.
Let $x\in E^N$ be a configuration without collisions and
$\gamma:(-\infty,0]\to V$ be a calibrating curve of $u$ such that $\gamma(0)=x$.
Then the maximal solution of the motion equation of the $N$ bodies
which extends $\gamma$
is defined until a positive finite time $a\in(0,+\infty)$.
\end{lemma}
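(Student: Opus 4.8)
The plan is to reduce the lemma to excluding a single scenario---that the forward Newton extension is complete---and then to rule that scenario out by a blow-down that manufactures a minimizer passing through an interior total collision.

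First I would record the structure at the endpoint. Since $U$ is Newtonian and $\dim E\geq 2$, minimizers avoid collisions, so by the discussion preceding this section the smooth solution $u$ belongs to $\calH$ and $\gamma$ calibrates $u$; at the collisionless configuration $x=\gamma(0)$ the covector $d_xu$ is the Legendre transform of $\dot\gamma(0)$, that is $\dot\gamma(0)=\nabla u(x)$. By the method of characteristics (differentiating \eqref{HJ} yields $(\mathrm{Hess}\,u)\nabla u=\nabla U$), the maximal forward solution of $\ddot\gamma=\nabla U(\gamma)$ issuing from $(x,\nabla u(x))$ agrees, as long as it stays in $\Omega$, with the integral curve of the characteristic field $\nabla u$; hence the extension still satisfies $\dot\gamma(t)=\nabla u(\gamma(t))$, calibrates $u$, and is a free time minimizer on every compact subinterval. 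On this extension the energy is zero, so $\norm{\dot\gamma}=\sqrt{2U(\gamma)}$: leaving $\Omega$ therefore means a collision at a finite time, whereas escape to infinity would take infinite time since $\norm{\dot\gamma}\to 0$ there. Thus $a=+\infty$ forces the extension to remain in $\Omega$ for all $t>0$, and the real content of the lemma is to exclude exactly this.

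So I would argue by contradiction, assuming $a=+\infty$. Then the forward extension is a complete free time minimizer, hence completely parabolic as $t\to+\infty$ (the time-reversed form of the statement used for $t\to-\infty$ in \cite{daLuzMad,Chenciner}); writing $\sigma=\pi\circ\gamma$ and $\rho=I(\gamma)^{1/2}$, this gives $\sigma(t)\to s_+$ for a central configuration $s_+$ with $\gamma(t)\sim\alpha_{s_+}t^{2/3}s_+$. By hypothesis $\gamma$ is also completely parabolic as $t\to-\infty$, with $\sigma(t)\to s_-$ and $\gamma(t)\sim\alpha_{s_-}|t|^{2/3}s_-$; by Shub's theorem \cite{Shub} both $s_\pm$ lie in $S\setminus K$ and so are collisionless. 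Now comes the key step: I apply Lemma \ref{lema:calibrating of homogeneous} to this complete minimizer to obtain the calibrating family $\gamma_\lambda(t)=\lambda\,\gamma(\lambda^{-3/2}t)$ and let $\lambda\to 0^+$ (a blow-down). Using the parabolic asymptotics above, $\gamma_\lambda$ converges uniformly on compact intervals to the broken homothetic curve $\gamma_0$ that is the parabolic collision of $s_-$ for $t\leq 0$ and the parabolic ejection of $s_+$ for $t\geq 0$, with $\gamma_0(0)=0$, a total collision at the interior time $t=0$; the convergence and the identification of the limit are obtained exactly by the Tonelli-plus-lower-semicontinuity argument used later in the paper. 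Lower semicontinuity of the action together with $u\in\calH$ forces $A(\gamma_0\mid_{[-1,1]})=\phi(\alpha_{s_-}s_-,\alpha_{s_+}s_+)$, so $\gamma_0$ is a free time minimizer between the collisionless configurations $\alpha_{s_-}s_-$ and $\alpha_{s_+}s_+$ that has a collision at an interior time. This contradicts Marchal's theorem (\cite{Marchal}, available since $\dim E\geq 2$), and the contradiction yields $a<+\infty$.

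The hard part is precisely this exclusion of the complete forward extension. Convexity of $I$ (from $\ddot I=2U$ on the zero-energy level) shows that a complete extension must escape parabolically, but escape alone is not contradictory, and the Hölder bound \eqref{holder} matches the action of the bi-parabolic minimizer at leading order, so a naive action comparison fails. The blow-down is what converts the two parabolic ends into a single interior total collision, where the no-collision property of minimizers can finally be invoked.
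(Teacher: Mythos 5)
Your conclusion is right and the overall architecture is sound, but the two halves deserve different verdicts. The first half --- showing that the maximal forward Newton extension still calibrates $u$ and is therefore a free time minimizer --- is the same step the paper takes, except that your justification via characteristics is shakier than it needs to be: writing $(\mathrm{Hess}\,u)\nabla u=\nabla U$ presupposes second derivatives, while Definition \ref{def:smoothsol} only gives differentiability of $u$ at collisionless configurations. The paper reaches the same conclusion with only $C^1$ data, by the propagation-of-calibration fact from \cite{Fathibook}: if $\gamma^*\mid_{(-\infty,t]}$ calibrates $u$ and $u$ is differentiable at $\gamma^*(t)$, then calibration extends to $(-\infty,t+\epsilon)$; since $u$ is differentiable everywhere off the collision set, the supremum of calibration times can only be the maximal time $a$ of the extension. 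You should substitute that argument. Where you genuinely diverge is the final step: the paper simply quotes Theorem 1.2 of \cite{daLuzMad} --- there are no complete free time minimizers --- and concludes $a<+\infty$ in one line, whereas you re-derive that nonexistence by blowing down the hypothetical complete bi-parabolic minimizer via $\gamma_\lambda(t)=\lambda\,\gamma(\lambda^{-3/2}t)$, $\lambda\to 0^+$, and producing in the limit a free time minimizer with an interior total collision, contradicting Marchal. This is a valid and rather elegant route; it mirrors the blow-up used in the first case of Theorem \ref{teo: foliation in S} and makes the lemma nearly self-contained modulo Marchal's theorem and the parabolic asymptotics, at the cost of being longer and still importing complete parabolicity from \cite{daLuzMad,Chenciner}. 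Two details of your blow-down need repair, though neither is fatal: Lemma \ref{lema:calibrating of homogeneous} is not available because $u$ is not assumed homogeneous in this lemma --- but you do not need it, since the rescaled curves $\gamma_\lambda$ are free time minimizers directly by Lemma \ref{homogeneityPHIxyt} and \eqref{phihomogeneo}, which is all the Tonelli/lower-semicontinuity argument requires; and the identity $A(\gamma_0\mid_{[-1,1]})=\phi(\gamma_0(-1),\gamma_0(1))$ should be drawn from that minimizing property together with the continuity of $\phi$, not from ``$u\in\calH$''.
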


\begin{proof}
We start recalling that If $\gamma:(a,b)\to V$
is calibrating of a function $u\in\calH$
then, $\gamma$ is a free time minimizer, hence has no collisions as
consequence of Marchal's theorem \cite{Marchal, ChencinerICM}.
In particular $\gamma$ is a differentiable motion of zero energy.
Moreover, $u$ is differentiable at $\gamma(t)$ and the Legendre transform
of $\dot\gamma(t)$ is precisely the derivative of $u$ at $\gamma(t)$
for all $t\in (a,b)$. On the other hand, if we only know that
$\gamma\mid_{(a,t]}$ calibrates $u$ for some $t\in (a,b)$ but we
also know that $u$ is differentiable at $\gamma(t)$, then
$\gamma$ must calibrate $u$ on a bigger interval $(a,t+\epsilon)$
for some $\epsilon>0$ (see \cite{Fathibook}).

Let $\gamma^*:(-\infty,a)\to V$, $a\in\R\cup\set{+\infty}$ be the maximal
motion extending $\gamma$. If $\gamma^*$ is not a calibrating curve
of $u$, then we can define
\[\tau=\max
\set{t\in (0,a)\mid \gamma^*\mid_{(-\infty,t]}\;\textrm{calibrates}\;u}\]
and clearly we have $0<\tau<a$. We deduce that $u$ can not be differentiable
at $\gamma^*(\tau)$ which contradicts the above considerations.
We conclude that $\gamma^*$ is calibrating for $u$ and as such,
it is a free time minimizer. On the other hand, we know that there are no
complete free time minimizers (see \cite{daLuzMad} theorem 1.2).
Therefore we must have $a<+\infty$.
\end{proof}

\begin{lemma}\label{lema: lower bound phixyt}
For every pair of configurations $x,y\in V$ and any $t>0$, we have
\[\phi(x,y,T)\geq \frac{1}{2}\,\norm{x-y}^2\,T^{-1}\,.\]
\end{lemma}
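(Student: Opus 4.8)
The plan is to prove the lower bound $\phi(x,y,T)\geq \tfrac12\norm{x-y}^2\,T^{-1}$ by exploiting the fact that the potential $U$ is nonnegative, so that the action is controlled from below by its purely kinetic part, and then applying the Cauchy--Schwarz inequality to the kinetic integral. First I would fix an arbitrary absolutely continuous curve $\gamma:[0,T]\to V$ with $\gamma(0)=x$ and $\gamma(T)=y$ and a finite action. Since $U\geq 0$ on all of $V$ (recall $\inf U=0$), I would immediately discard the potential term and write
\[
A(\gamma)=\int_0^T \tfrac12\,\dot\gamma(t)^2+U(\gamma(t))\,dt
\;\geq\;\int_0^T \tfrac12\,\norm{\dot\gamma(t)}^2\,dt\,.
\]

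Next I would bound the remaining kinetic integral from below using the displacement. By the fundamental theorem of calculus for absolutely continuous curves, $y-x=\int_0^T \dot\gamma(t)\,dt$, so the triangle inequality for the (mass-norm) Bochner integral gives $\norm{x-y}\leq \int_0^T\norm{\dot\gamma(t)}\,dt$. Applying the Cauchy--Schwarz inequality to the pair $\norm{\dot\gamma(t)}$ and the constant function $1$ on $[0,T]$ yields
\[
\norm{x-y}^2\;\leq\;\left(\int_0^T\norm{\dot\gamma(t)}\,dt\right)^2
\;\leq\; T\int_0^T\norm{\dot\gamma(t)}^2\,dt\,.
\]
Combining this with the previous display gives $A(\gamma)\geq \tfrac12\norm{x-y}^2\,T^{-1}$ for every such curve $\gamma$.

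Finally I would take the infimum over all curves from $x$ to $y$ in time $T$. Since the bound holds uniformly for each competitor, it passes to the infimum, and we obtain $\phi(x,y,T)\geq \tfrac12\norm{x-y}^2\,T^{-1}$, as claimed. I do not expect any serious obstacle here: the only points requiring mild care are the justification that $U\geq 0$ everywhere (including at collision configurations, where $U=+\infty$, so the inequality is trivial there) and the validity of the fundamental theorem of calculus and the norm estimate for $H^1$ curves, both of which are standard for finite-action curves lying in the Sobolev space $H^1([0,T],V)$ as recalled in the excerpt. Curves of infinite action satisfy the bound vacuously, so they may be ignored when taking the infimum.
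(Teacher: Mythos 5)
Your proposal is correct and follows essentially the same route as the paper: drop the nonnegative potential term, bound $\norm{x-y}$ by $\int_0^T\norm{\dot\gamma(t)}\,dt$, apply the Cauchy--Schwarz (Bunyakovsky) inequality to compare this with the kinetic integral, and pass to the infimum over competitors. The only difference is cosmetic --- you spell out the justification for absolutely continuous curves and the vacuous case of infinite action, which the paper leaves implicit.
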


\begin{proof}
Let $\gamma:[0,T]\to V$ be any absolutely continuous curve such that
$\gamma(0)=x$ and $\gamma(t)=y$. Neglecting the integral of the Newtonian
potential in the definition of the Lagrangian action we deduce that
\[2\,A(\gamma)\geq \int_0^T\norm{\dot\gamma(t)}^2 \,dt\,.\]
Applying the Bunyakovsky inequality we can write
\begin{eqnarray*}
\norm{x-y}&\leq&\int_0^T\norm{\dot\gamma(t)}dt\\
&\leq&
\left(\int_0^T\,dt\right)^{1/2}\left(\int_0^T\norm{\dot\gamma(t)}^2\right)^{1/2}\\
&\leq& T^{1/2}\,(2\,A(\gamma))^{1/2}
\end{eqnarray*}
from which we get
\[2\,A(\gamma)\geq \norm{x-y}^2\,T^{-1}\]
so the proof is obtained taking the infimum over all possible curve $\gamma$.
\end{proof}

\begin{proof}[\bf Proof of theorem \ref{teo:3BP no smooth homogeneous}]

Suppose that there exists a smooth homogeneous solution $u:V\to\R$
of Hamilton-Jacobi equation \eqref{HJ}. Let $v$ the restriction of $u$
to the unit sphere $S$. Theorem \ref{teo: foliation in S} says that
the set $Z_v$ of critical points of $v$ is contained in $\calM$ which
has at most five connected components, namely
three corresponding to Euler configurations, and the corresponding to
the Lagrange equilateral configurations (two components in the planar case
and only one if $k=\dim E\geq3$). On
the other hand, the compact set $K$ of normal partial collisions on $V$
has three connected components, which we will call $K_{12}$, $K_{23}$ and
$K_{31}$. We note that the open set $A=S\setminus(Z_v\cup K)$ is connected.
This is clear for the planar or the spatial three body problem, since
the compact sets $\calM$, and $K_{ij}$ are a finite number of
orbits of the action of the orthogonal group $O(E)$ which has dimension
$k(k-1)/2$, where $k=\dim E$, and $\dim S=2k-1$. Thus we have codimension
$2$ for $k\in\set{2,3}$. But in fact $A$ is connected for every $k\geq 2$,
see proposition \ref{prop: A connected} bellow.
Finally,
applying part b.(iii) of theorem \ref{teo: foliation in S} we conclude that
$K=r(A)$ is connected, which we know to be false.
\end{proof}

We are convinced that the following proposition may be useful to generalize the
application of the here developed techniques to the case of more than three bodies.
When the number of bodies $N\geq 4$ the set $K$ becomes connected, suggesting that
other topological invariants should be considered. Of course, a constructive proof
could be established but surely it would be more cumbersome than presented here.

\begin{proposition}\label{prop: A connected}
For the Newtonian $N$-body problem we have that,
if the set of similarity classes of central
configurations is finite,
then the open set of noncentral
configurations without collisions
is connected.
\end{proposition}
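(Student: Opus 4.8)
The plan is to show that the complement of the central configurations and collisions is connected by a dimension-counting argument in the sphere $S$ of normal configurations, using the hypothesis that the similarity classes of central configurations form a finite set. First I would recall the setup: we work in the sphere $S\subset V$ of normal configurations, and inside it we remove two closed sets, the compact set $K$ of configurations with partial collisions and the set $\calC$ of central configurations (normalized). The set of noncentral collisionless normal configurations is then $A=S\setminus(K\cup\calC)$, and its connectedness is equivalent to the connectedness of the cone over it, i.e.\ the set of noncentral collisionless configurations in $V\setminus\set{0}$.

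\textbf{Codimension of the removed sets.} The key step is to estimate the dimension of $K\cup\calC$ inside $S$ and show it has codimension at least $2$, which for a connected manifold $S$ (note $\dim S=k(N-1)-1\geq 2$ for $N\geq 3$, $k\geq 2$) immediately gives connectedness of the complement. For $\calC$: under the finiteness hypothesis, the normalized central configurations form finitely many orbits of the action of $O(E)\times SO(?)$; more precisely, each similarity class gives, after normalization, a compact orbit of the rotation group $O(E)$ acting on $S$, whose dimension is at most $\dim O(E)=k(k-1)/2$. Since there are finitely many classes, $\dim\calC\leq k(k-1)/2$, and one checks $k(k-1)/2\leq \dim S-2=k(N-1)-3$ for $N\geq 3$, $k\geq 2$. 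For $K$: a partial collision set $K_J$ (where a cluster $J$ collides) is contained in a proper linear subspace of $V$ intersected with $S$, so $\dim K\leq \dim S-k\leq \dim S-2$. Hence $K\cup\calC$ is a finite union of smooth submanifolds (or at worst compact sets) each of codimension $\geq 2$ in $S$.

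\textbf{Passing from codimension to connectedness.} The main obstacle, and the step requiring care, is converting the codimension bound into a genuine connectedness statement, because $\calC$ need not be a smooth submanifold a priori and the union $K\cup\calC$ is only known to be a finite union of compact sets of low dimension. The clean way is to invoke the general fact that removing a finite union of closed subsets of topological (or Hausdorff) dimension at most $\dim S-2$ from a connected manifold $S$ leaves a connected set; this follows from transversality/general position for smooth pieces, and for the possibly singular $\calC$ one uses that a compact set of covering dimension $\leq \dim S-2$ cannot locally separate $S$ (a standard consequence of \v{C}ech cohomology / Alexander duality, since $H^{\dim S-1}$ of such a set vanishes). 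Concretely, given two points of $A$, join them by a path in $S$ (which is connected), then perturb the path off $K\cup\calC$: away from the finitely many smooth orbit-pieces this is transversality, and near the low-dimensional compact set $\calC$ the separation obstruction vanishes by the dimension bound. The finiteness hypothesis is exactly what guarantees $\calC$ is compact of the stated dimension rather than a positive-dimensional continuum that could have codimension one.

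\textbf{Conclusion.} Assembling these, $A=S\setminus(K\cup\calC)$ is the complement in the connected manifold $S$ of a finite union of closed sets each of codimension at least two, hence $A$ is connected; equivalently the open set of noncentral collisionless configurations in $V\setminus\set{0}$ is connected, which is the assertion. I expect the dimension arithmetic ($k(k-1)/2\leq k(N-1)-3$) to be routine, and the genuinely delicate point to be the justification that the central configuration set, being only known to be finite up to similarity, contributes a set that does not separate $S$ locally — this is where the finiteness hypothesis is indispensable and where one must appeal to a separation/codimension theorem rather than naive transversality.
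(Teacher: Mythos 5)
Your route is genuinely different from the paper's, and as written it contains a concrete error in the key dimension count. You claim that each normalized similarity class of central configurations is an $O(E)$-orbit of dimension at most $\dim O(E)=k(k-1)/2$ and that ``one checks $k(k-1)/2\leq k(N-1)-3$ for $N\geq 3$, $k\geq 2$.'' This inequality is false whenever $k$ is large relative to $N$: for $N=3$ and $k=4$ it reads $6\leq 5$. The bound can be repaired, but only by accounting for stabilizers: a central configuration whose bodies span a $d$-dimensional subspace ($d\leq N-1$) has stabilizer containing $O(k-d)$, so its orbit has dimension $dk-d(d+1)/2$, and one then checks this is $\leq k(N-1)-3$ for all $N\geq 3$, $k\geq 2$ (including the collinear case $d=1$, where the orbit has dimension $k-1$). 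With that correction the codimension-two claim does hold. A second, minor point: each similarity class normalizes to a single $O(E)$-orbit, which is a compact \emph{embedded submanifold} of $S$; so the appeal to covering dimension and Alexander duality is unnecessary --- ordinary transversality of paths to a finite union of compact submanifolds of codimension $\geq 2$ already gives connectedness of the complement. The finiteness hypothesis is indeed what makes the removed set a finite union of such orbits.

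For comparison, the paper's proof is dynamical rather than dimension-theoretic: given two noncentral collisionless configurations $x,y$, it joins them by a free time minimizer $\gamma:[0,T]\to V$ (existence from \cite{daLuzMad}), which is collision-free on $(0,T)$ by Marchal's theorem and hence a real-analytic motion; the finiteness hypothesis together with analyticity of the mutual-distance ratios $\norm{r_i(t)-r_j(t)}/\norm{r_k(t)-r_l(t)}$ forces $\gamma$ to meet the central set at only finitely many times (otherwise the ratios would be constant and $x,y$ would be central), and one then perturbs $\gamma$ near those finitely many times. The paper even remarks, in the proof of theorem \ref{teo:3BP no smooth homogeneous}, that the naive orbit-dimension count gives codimension two only for $k\in\set{2,3}$ in the three-body case --- precisely the obstruction your arithmetic runs into --- which is why it gives the dynamical argument for general $k$. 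Once your stabilizer computation is fixed, your argument is a valid and more elementary alternative (it avoids Marchal, Tonelli, and analyticity), at the cost of the extra bookkeeping on orbit dimensions.
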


\begin{proof}
Of course we are excluding the collinear case where the
set has $n!$ connected components. Let $x,y\in V$ two
given configurations which are not central
nor collision configurations. It is proven in \cite{daLuzMad}
(theorem 3.1) that there is a free time minimizer $\gamma:[0,T]\to V$
such that $\gamma(0)=x$ and $\gamma(T)=y$. By Marchal's theorem
we know that $\gamma(t)$ has no collisions for $t\in(0,t)$.
Moreover, since the set of central configurations is closed,
we have that $\gamma(t)$ is noncentral if $t$ is close to $0$ or
if $t$ is close to $T$. Let us define
\[C=\set{t\in (0,T)\mid \gamma(t) \textrm{ is a central configuration }}\,.\]
We claim that $C$ is finite. Otherwise $C$ must accumulate at some
$t^*\in(0,T)$. Thus using the finiteness hypothesis we can choose
a sequence
$t_n\to t^*$, and fixed central configuration $z$
such that $\gamma(t_n)$ is similar to $z$ for all
$n>0$. Writing
$\gamma(t)=(r_1(t),\dots,r_N(t))$ and $z=(z_1,\dots,z_N)$
we have that the equalities
\begin{equation}\label{ijkl}
\alpha_{ijkl}(t)=\frac{\norm{r_i(t)-r_j(t)}}{\norm{r_k(t)-r_l(t)}}=
\frac{\norm{z_i-z_j}}{\norm{z_k-z_l}}
\end{equation}
hold for each $t=t_n$ and every choice of $i,j,k,l\in\set{1,\dots,N}$
such that $i\neq j$ and $k\neq l$. Since $\gamma$ has no collisions,
the functions $\alpha_{ijkl}$
are analytic functions of $t$ on $(0,T)$. Therefore each one of the
functions $\alpha_{ijkl}$ is constant, which means that $\gamma(t)$
is similar to $z$ for all $t\in [0,T]$ contradicting the fact that the
configurations $x$ and $y$ are not central.

We have proved that there is at most a finite set of times
$0<t_1<\cdots<t_k<T$ such that $\gamma(t)$ is a central configuration.
We will now perturb the curve $\gamma$ slightly,
thereby avoiding central configurations. This can be achieved perturbing
slightly only one of the functions $r_i(t)$ in small neighborhoods of
times $t=t_i$. If the perturbation $\gamma'$ is sufficiently small, we have for
$t-t_i$ small, that  $\gamma'(t)$ is neither similar to $\gamma(t_i)$ nor any
other central configuration.
\end{proof}

\vspace{3mm}
\noindent
\emph{Acknowledgements.} I would like to express my special
gratitude to M.-C. Arnaud,
S. Terracini and \mbox{A. Venturelli} for motivating me to continue
working in this area, and also to V. Kaloshin for giving me
the opportunity to visit the Department of Mathematics at
the University of Maryland where part of this research was done.


\begin{thebibliography}{99}


\bibitem{AlbouyKaloshin}
  \textsc{A. Albouy, V. Kaloshin}
  Finiteness of central configurations of five bodies in the plane.
  \emph{Ann. of Math. (2)} \textbf{176} (2012), no. 1, 535--588.

\bibitem{Vivina}
  \textsc{V. Barutello, S. Secchi.}
  Morse index properties of colliding solutions to the $N$-body
  \mbox{problem}.
  \emph{Ann. Inst. H. Poincaré Anal. Non Linéaire} \textbf{25}
  (2008), no. 3, 539--565.

\bibitem{Chenciner}
  \textsc{A. Chenciner.}
  Collisions totales, mouvements complètement paraboliques
  et réduction des homothéties dans le problème
  des $n$ corps, in J. Moser at 70 (Russian).
  \emph{Regul. Chaotic Dyn.} \textbf{3} (1998), no. 3, 93--106.

\bibitem{ChencinerICM}
  \textsc{A. Chenciner.}
  Action minimizing solutions of the Newtonian n-body problem:
  from \mbox{homology} to symmetry.
  \emph{Proceedings of the International Congress of Mathematicians}
  Vol. III (Beijing, 2002), 279--294, Higher Ed. Press, Beijing, 2002.

\bibitem{Crandall-Lions}
  \textsc{M.G. Crandall, P.-L. Lions.}
  Viscosity solutions of Hamilton-Jacobi equations.
  \emph{Trans. Amer. Math. Soc.} \textbf{277} (1983), 1--42.

\bibitem{Crandall-Evans-Lions}
  \textsc{M.G. Crandall, L. C. Evans, P.-L. Lions.}
  Some properties of viscosity solutions of Hamilton-Jacobi equations.
  \emph{Trans. Amer. Math. Soc.} \textbf{282} (1984), 487-502

\bibitem{daLuzMad}
  \textsc{A. da Luz, E. Maderna.}
  On the free time minimizers of the Newtonian $N$-body problem.
  \emph{Math. Proc. of the Camb. Phil. Soc.} (2013), to appear.

\bibitem{Fathibook}
  \textsc{A. Fathi.}
  Weak KAM Theorem in Lagrangian Dynamics.
  (2008), preprint available at\\
  \texttt{http://www.math.u-bordeaux1.fr/$\sim$pthieull/Recherche/KamFaible/Publications}.

\bibitem{FerrTerr}
  \textsc{D. Ferrario, S. Terracini.}
  On the existence of collisionless equivariant minimizers
  for the classical n-body problem.
  \emph{Invent. Math.} \textbf{155} (2004), no. 2, 305–362.

\bibitem{Mad1}
  \textsc{E. Maderna.}
  On weak KAM theory of $N$-body problems.
  \emph{Ergodic Theory Dynam. \mbox{Systems}}
  \textbf{32} (2012), 1019--1041.

\bibitem{Mad2}
  \textsc{E. Maderna.}
  Translation invariance of weak KAM solutions
  of the \mbox{Newtonian} $N$-body \mbox{problem}.
  \emph{Proc. Amer. Math. Soc.} \textbf{141} (2013), 2809--2816.

\bibitem{Mat}
  \textsc{J. Mather.}
  Action minimizing invariant measures for
  positive definite Lagrangian systems.
  \emph{Math. Z.}
  \textbf{207} (1991) no. 2, 169--207.

\bibitem{VentuMad}
  \textsc{E. Maderna, A. Venturelli.}
  Globally minimizing parabolic motions
  in the \mbox{Newtonian} \mbox{$N$-body} problem.
  \emph{Arch. Ration. Mech. Anal.} \textbf{194} (2009), no. 1, 283--313.

\bibitem{VentuMad2}
  \textsc{E. Maderna, A. Venturelli.}
   Critical Busemann functions of homogeneous $N$-body \mbox{problems}.
   \emph{work in progress} (2013).

\bibitem{Marchal}
  \textsc{C. Marchal.}
  How the method of minimization of action avoids singularities.
  \emph{Celestial Mech. Dynam. Astronom.} \textbf{83}  (2002),  no. 1-4, 325--353.

\bibitem{BorisHector}
  \textsc{B. Percino, H.Sánchez-Morgado.}
  Busemann functions for the N-body problem,
  \emph{preprint in} \texttt{Arxiv.org} (2013).

\bibitem{Shub}
  \textsc{M. Shub.}
  Appendix to Smale's paper: "Diagonals and relative equilibria'',
  in Manifolds – Amsterdam 1970 (Proc. Nuffic Summer School)
  \emph{Lect. Notes Math.} \textbf{197} (1971),199--201.

\bibitem{SiegelMoser}
  \textsc{C.L. Siegel, J.K. Moser.}
  Lectures on Celestial Mechanics.
  \emph{Classics in Mathematics} Springer-Verlag,
  Berlin Heidelberg (1995).


\end{thebibliography}
\end{document}